\newcommand{\ben}{{\mathbb N}}
\newcommand{\ber}{{\mathbb R}}
\newcommand{\beq}{{\mathbb Q}}
\newtheorem{theorem}{Theorem}[section]
\newtheorem{lemma}[theorem]{Lemma}
\theoremstyle{definition}
\newtheorem{definition}[theorem]{Definition}
\theoremstyle{remark}
\numberwithin{equation}{section}
\theoremstyle{definition}
\begin{document}
\title{Preservation of notion of C sets near zero over reals}
\author{Kilangbenla Imsong}
\address{Department of Mathematics, Nagaland University, Lumami-798627, Nagaland, India.}
\email{kilangbenla@nagalanduniversity.ac.in}
\author{Ram Krishna Paul}
\address{Department of Mathematics, Nagaland University, Lumami-798627, Nagaland, India.}
\email{rmkpaul@gmail.com, rkpaul@nagalanduniversity.ac.in}
\begin{abstract}
    There are several notions of largeness in a semigroup. N. Hindman and D. Strauss established that if $u,v \in \, \ben$, $A$ is a $u \times v$  matrix with entries from $\beq$ and $\psi$ is a notion of a large set in $\ben$, then $\{\vec{x} \in \ben^v: A\vec{x} \in \psi^u \}$ is large in $\ben^v$.  Among the several notions of largeness, C sets occupies an important place of study because they exhibit strong combinatorial properties. The analogous notion of C set appears for a dense subsemigroup $S$ of $((0, \infty),+)$ called a C-set near zero. These sets also have very rich combinatorial structure. In this article, we investigate the above result for C sets near zero in $\ber^+$  when the matrix has real entries. We also develop a new characterisation of C-sets near zero in $\ber^+$.
\end{abstract}
\maketitle

\section{Introduction}
The famous van der Waerden's theorem \cite{W} of Ramsey Theory  states that if $r,l \in \mathbb{N}$ and $\mathbb{N} = \bigcup_{i=1}^{r} D_i$, then there exists $i \in \{1,2,...,r\}$ and $a,d  \in \mathbb{N}$ with  $\{a,a+d,...,a+ld\} \subseteq D_i$. This powerful combinatorial result can also be expressed using matrices, by saying that the entries of the matrix  $M \Vec{x}$, where 
  \begin{center} 
 $   M = \begin{pmatrix}
      1 & 0\\
      1 & 1\\
      1 & 2\\
      \vdots\\
      1 & l\\
      \end{pmatrix}$
 and 
      $\Vec{x} =\begin{pmatrix}
  a \\
  d
  \end{pmatrix} \in  \mathbb{N}^2$, \\
  \end{center}
  are monochromatic.  A similar expression of Schur's theorem, which states that if $r \in \mathbb{N}$ and $\mathbb{N} = \bigcup_{i=1}^{r} D_i$ then there exists $i \in \{1,2,...,r\}$ and $x,y  \in \mathbb{N}$ with $\{x,y,x+y\} \subseteq D_i$, is that, the entries of the matrix  $M \Vec{x}$, where 
   \begin{center} 
 $   M = \begin{pmatrix}
      1 & 0\\
      0 & 1\\
      1 & 1
      \end{pmatrix}$
and
      $\Vec{x} =\begin{pmatrix}
  x \\
  y
  \end{pmatrix} \in  \mathbb{N}^2$, \\
  \end{center}
  are monochromatic.
  These expressions of the combinatorial results using matrices follows from the following definition.

  \begin{definition} Let $u,v \in \mathbb{N}$ and let $M$ be a $u \times v$ matrix with entries from $\mathbb{Q}$. The matrix $M$ is image partition regular over $\mathbb{N}$ (abbreviated IPR/$\ben$) if and only if whenever $r \in \mathbb{N}$ and $\mathbb{N} = \bigcup_{i=1}^{r} C_i$, there exists $i\in \{1,2,...,r\}$ and $\Vec{x} \in \mathbb{N}^v$ such that $M\Vec{x} \in C_i^u$ . 
  \end{definition}
Thus the above two theorems is the assertion that the matrix $M$ is image partition regular over $\ben$.

The first characterisations of matrices with entries from $\mathbb{Q}$ that are IPR/$\mathbb{N}$ were obtained in \cite{HLb} and over the years other characterisations have been obtained, which can be found in detail in \cite{HSc}. Among these, the ones involving the notion of \textit{central} sets have been a focal point of study primarily because of the rich combinatorial structure they possess.   Central subsets of $\mathbb{N}$ were first introduced by Furstenberg defined in terms of notions of topological dynamics and he has given and proved the Central Sets Theorem which we state below. We let $\mathcal{P}_f(X)$ to denote any finite collection of subsets of a set $X$.
\begin{theorem} Let $l \in \mathbb{N}$ and for each $i \in \{1,2, \cdots, l\}$, let $\{y_{i,n}\}_{n=1}^{\infty}$ be a sequence in $\mathbb{Z}$. Let $C$ be central subset of $\mathbb{N}$. Then there exists sequences $\{a_n\}_{n=1}^{\infty}$ in $\mathbb{N}$ and $\{
H_n\}_{n=1}^{\infty}$ in $\mathcal{P}_f(\mathbb{N})$ such that 

(1) for all $n$, max $H_n<$ min $H_{n+1}$ and

(2) for all $F \in \mathcal{P}_f(\mathbb{N})$, and all $i \in \{1,2,...,l\}$,
\begin{center}
    $\sum_{n \in F} ( a_n + \sum_{t \in H_n}y_{i,t})\in C$.
\end{center}
\end{theorem}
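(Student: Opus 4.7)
The plan is to prove the Central Sets Theorem by the standard algebraic argument in the Stone--\v{C}ech compactification $(\beta\ben,+)$. The starting point is the algebraic characterisation of centrality: $C \subseteq \ben$ is central exactly when $C$ is a member of some idempotent $p$ lying in a minimal left ideal of $(\beta\ben,+)$. After fixing such a $p$ with $C \in p$, set $C^{\star} = \{x \in C : -x + C \in p\}$. A routine calculation yields $C^{\star} \in p$ together with $-x + C^{\star} \in p$ for every $x \in C^{\star}$; these two self-improving properties of $C^{\star}$ drive the whole construction.

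I would then build $\{a_n\}$ and $\{H_n\}$ by induction on $n$, maintaining as inductive invariant that $\max H_{n-1} < \min H_n$ and that for every nonempty $F \subseteq \{1,\ldots,n\}$ and every $i \in \{1,\ldots,l\}$,
\[
  s_{F,i} \;:=\; \sum_{k \in F}\Big(a_k + \sum_{t \in H_k} y_{i,t}\Big) \;\in\; C^{\star}.
\]
At stage $n$, the finitely many previous sums $s_{F,i}$ (for nonempty $F \subseteq \{1,\ldots,n-1\}$ and $i \leq l$) all sit in $C^{\star}$, so the intersection $A_n := C^{\star} \cap \bigcap_{F,i}(-s_{F,i}+C^{\star})$ also lies in $p$, hence is nonempty; pick $a_n \in A_n$. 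Any such $a_n$ guarantees that prepending it to each old partial sum keeps the result inside $C^{\star}$, which is exactly what is needed when $n$ is later included in $F$.

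The delicate part is the simultaneous choice of $H_n$: we need a single finite set, with $\min H_n$ larger than $\max H_{n-1}$, so that for every $i$ and every relevant partial sum $s_{F,i}$ the quantity $a_n + \sum_{t \in H_n} y_{i,t} + s_{F,i}$ lies in $C^{\star}$. I would handle this by considering the map $\varphi : \ben \to \ben \times \bez^{l}$ given by $\varphi(t) = (t, y_{1,t}, \ldots, y_{l,t})$, lifting $p$ to an idempotent $\tilde{p}$ in a suitable compact semigroup extension covering $\ben \times \bez^{l}$, and then invoking the FS-set lemma for $\tilde{p}$ to extract $H_n$ whose coordinate-wise sums land simultaneously in the finitely many prescribed sets in $p$ required by the invariant. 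The freedom to demand $\min H_n > \max H_{n-1}$ comes from the fact that every member of a nonprincipal idempotent ultrafilter is cofinite-witnessable.

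Condition (1) of the theorem is built into the induction, and condition (2) is the inductive invariant read off at the stage $n = \max F$. The main obstacle is precisely the coordinated selection of $H_n$: for each single sequence $\{y_{i,t}\}$ an FS-argument is straightforward, but synchronising one $H_n$ across all $l$ indices is what forces the passage to a product-type idempotent (or, equivalently, an $l$-fold iteration of a shift-invariance argument). Once that step is secured, the rest of the proof is an unwound induction using only the $C^{\star}$ lemma.
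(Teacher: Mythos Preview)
The paper does not prove this theorem; it simply cites Furstenberg's dynamical argument. Your route via $(\beta\ben,+)$ is the standard modern alternative (essentially the proof in Hindman--Strauss, \emph{Algebra in the Stone--\v Cech Compactification}, Chapter 14), and the inductive scaffolding with $C^{\star}$ is exactly right.

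There is, however, a real gap at the point you yourself flag as the obstacle. The map $\varphi(t)=(t,y_{1,t},\ldots,y_{l,t})$ is not a homomorphism, since there is no reason $y_{i,s+t}$ should equal $y_{i,s}+y_{i,t}$; hence its continuous extension does not carry idempotents to idempotents, and ``lifting $p$ along $\varphi$'' has no evident meaning. More importantly, after your opening sentence you never again use the \emph{minimality} of $p$, yet minimality is indispensable: there exist non-minimal idempotents in $\beta\ben$ whose members fail to be J-sets, so for such $p$ no single $H$ can serve two sequences simultaneously, and any argument that does not invoke minimality must fail. In the standard algebraic proof one sets $\vec y(n)=(y_{1,n},\ldots,y_{l,n})\in\bez^{\,l}$, picks an idempotent $q$ in the compact subsemigroup $\bigcap_{m}\overline{FS(\langle\vec y(n)\rangle_{n\ge m})}$, and uses minimality of the diagonal $\bar p=(p,\ldots,p)$ in the form $\bar p\in(\beta\bez)^{l}+\tilde q+\bar p$ (with $\tilde q$ the image of $q$) to extract a single $a\in\ben$ and a single $H\in\mathcal P_f(\ben)$ with $a+\sum_{t\in H}y_{i,t}\in C^{\star}$ for every $i$. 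Once that one-step lemma is in hand, your induction goes through verbatim.
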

\begin{proof}
 \cite[Proposition 8.21]{F}
\end{proof}
Central sets were also defined using the algebra of Stone $\check{C}$ech compactification of a discrete semigroup which can be found in  \cite[Definition 4.42]{HSc}. The equivalence of the notions of \textit{central} and \textit{dynamically central} was established by V. Bergelson and N. Hindman in \cite{BH}. 
The following theorem is the characterisation of image partition regular matrices involving central sets.

\begin{theorem} \label{th1}Let $u,v \in \mathbb{N}$ and let $A$ be a $u \times v$ matrix with entries from $\mathbb{Q}$. The following statements are equivalent.

(a) $A$ is image partition regular over $\ben$.

(b) For every central set $C$ in $\mathbb{N}$, there exists  $\Vec{x} \in \mathbb{N}^v$ such that $A\Vec{x} \in C^u$. 

(c)  For every central set $C$ in $\mathbb{N}$, \{$\Vec{x} \in \mathbb{N}^v$ :  $A\Vec{x} \in C^u$ \} is central in $ \mathbb{N}^v$.
    
\end{theorem}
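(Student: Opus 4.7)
The plan is to prove the cycle $(c) \Rightarrow (b) \Rightarrow (a) \Rightarrow (c)$, with the first two implications routine and the last carrying the content of the theorem.

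For $(c) \Rightarrow (b)$, I simply note that a central set is nonempty; if $\{\vec{x} \in \mathbb{N}^v : A\vec{x} \in C^u\}$ is central in $\mathbb{N}^v$ it contains some $\vec{x}$, which is exactly the witness $(b)$ demands. For $(b) \Rightarrow (a)$, given a finite colouring $\mathbb{N} = \bigcup_{i=1}^r C_i$, I would fix any minimal idempotent $p \in \beta\mathbb{N}$. Exactly one cell $C_i$ lies in $p$, and membership in a minimal idempotent is the standard algebraic characterisation of centrality. Applying $(b)$ to this central $C_i$ produces $\vec{x} \in \mathbb{N}^v$ with $A\vec{x} \in C_i^u$, which is the conclusion of $(a)$.

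The substance is $(a) \Rightarrow (c)$. Fix a central $C \subseteq \mathbb{N}$ and choose a minimal idempotent $p \in \beta\mathbb{N}$ with $C \in p$; the goal is a minimal idempotent $q \in \beta\mathbb{N}^v$ with $\{\vec{x}:A\vec{x} \in C^u\} \in q$. The strategy is to leverage $(a)$ not just for $C$ but simultaneously for every $E \in p$. After clearing denominators, $\vec{x} \mapsto A\vec{x}$ extends to a continuous map $\tilde{A}:\beta\mathbb{N}^v \to \beta\mathbb{Z}^u$. I would then consider
\[
T = \bigl\{q \in \beta\mathbb{N}^v : \{\vec{x}: A\vec{x} \in E^u\} \in q \text{ for every } E \in p\bigr\},
\]
which is the preimage under $\tilde{A}$ of a suitable closed subset of $\beta\mathbb{Z}^u$. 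Image partition regularity applied along the filter base of sets in $p$, together with compactness of $\beta\mathbb{N}^v$, shows $T$ is nonempty. One then verifies that $T$ is a closed subsemigroup of $\beta\mathbb{N}^v$ and meets the smallest ideal $K(\beta\mathbb{N}^v)$; Ellis's theorem then furnishes an idempotent $q \in T$ that is minimal in $\beta\mathbb{N}^v$, and $C \in p$ forces $\{\vec{x}:A\vec{x} \in C^u\} \in q$, so this set is central in $\mathbb{N}^v$.

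The main obstacle is showing that $T$ actually meets the smallest ideal $K(\beta\mathbb{N}^v)$. Image partition regularity directly gives only nonemptiness; promoting this to intersection with $K(\beta\mathbb{N}^v)$ requires verifying that the extension $\tilde{A}$ interacts correctly with left multiplication on $\beta\mathbb{N}^v$, so that $T$ inherits a subsemigroup structure. Checking this compatibility carefully — in particular handling the rational entries of $A$, which may take $\vec{x}$ outside $\mathbb{N}^u$ prior to clearing denominators — is where the real bookkeeping sits; once it is in place, the existence of the desired minimal idempotent $q$ is a standard consequence of the algebra of $\beta S$.
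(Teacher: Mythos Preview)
The paper does not supply its own argument here; it simply cites \cite[Theorem 15.24]{HSc}. So there is no proof in the paper to compare against, and the question is whether your sketch stands on its own.

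Your implications $(c)\Rightarrow(b)$ and $(b)\Rightarrow(a)$ are fine. The genuine gap is in $(a)\Rightarrow(c)$, and it appears earlier than where you locate it. You write that ``image partition regularity applied along the filter base of sets in $p$ \ldots\ shows $T$ is nonempty,'' but this does not follow from $(a)$ alone. Image partition regularity says that for any finite colouring of $\mathbb{N}$ there is some $\vec{x}$ with $A\vec{x}$ monochromatic; it does \emph{not} say the image lands in a prescribed cell. To show $T\neq\emptyset$ you need, for each $E\in p$, some $\vec{x}$ with $A\vec{x}\in E^u$ --- and that is exactly statement $(b)$. So your proof of $(a)\Rightarrow(c)$ silently presupposes $(a)\Rightarrow(b)$, which you never establish. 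That implication is where the real combinatorial work of the theorem lives; in the standard treatment it is obtained via the first-entries-matrix characterisation of image partition regular matrices together with the Central Sets Theorem, none of which appears in your outline.

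The second issue you flag --- that $T$ meets $K(\beta\mathbb{N}^v)$ --- is also left open, and your proposed resolution (checking that $\tilde{A}$ is compatible with left multiplication so that $T$ is a subsemigroup) is insufficient: a nonempty closed subsemigroup of $\beta\mathbb{N}^v$ need not intersect the smallest ideal. One needs further input, for instance that $\tilde{A}$ carries $K(\beta\mathbb{N}^v)$ onto something containing $\bar{p}=(p,\ldots,p)$, and verifying this again leans on the first-entries structure of $A$. In short, the algebraic scaffolding you describe for promoting $(b)$ to $(c)$ is broadly the right shape, but the substantive step $(a)\Rightarrow(b)$ is missing entirely, and without it neither the nonemptiness of $T$ nor its intersection with $K(\beta\mathbb{N}^v)$ is secured.
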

\begin{proof} \cite[Theorem 15.24]{HSc}
\end{proof}

The sets which satisfy the conclusion of Central Sets Theorem are called C-sets and hence the natural question arises as to whether Theorem \ref{th1}  holds true if \textit{central set} is  replaced by \textit{C set}. This has been answered in affirmative and Hindman and Strauss has proved it in \cite [Theorem 1.4]{HSb}.\\
For any dense subsemigroup  $S$ of $(\mathbb{R},+)$, it has been observed that there are subsets of $\mathbb{R}$ living near zero which also satisfy a version of the Central Sets Theorem. These sets have been defined as \textit{central sets near zero} in \cite{HL} and the corresponding notion of \textit{Image Partition Regularity Near Zero} has been developed in \cite{DH}.
\begin{definition}\label{thtB} Let $S$ be a subsemigroup
of $({\mathbb R},+)$ with $0\in$ $c\ell S$, let $u,v\in\ben$, and let
$A$ be a $u\times v$ matrix with entries from $\beq$.
Then $A$ is {\em image partition regular over $S$ near zero\/}
(abbreviated IPR/$S_0$)
if and only if, whenever $S\setminus\{0\}$ is finitely
colored and $\delta>0$, there exists $\vec x\in S^v$
such that the entries of $A\vec x$ are monochromatic and lie
in the interval $(-\delta,\delta)$.
\end{definition}
In the same paper \cite{DH}, image partition regularity near zero over various dense subsemigroups have been considered. Of notable significance for our work is the result that for finite matrices, image partition regular matrices over $\mathbb{R}$ and image partition regular matrices near zero over $\mathbb{R}^+$ are same which has been shown by one of the authors  in \cite [Theorem 2.3]{DPb}. 
The notion of C sets have also been extended to C sets near zero in \cite{BT}. In this paper we will give  a new characterisation of C-sets near zero in $\ber^+$ and prove the following theorem. 

\begin{theorem}\label{th2} Let $u,v \in \mathbb{N}$ and let $M$ be a $u \times v$ matrix with entries from $\mathbb{R}$. The following statements are equivalent.\\
\hspace{1 cm}(a) $M$ is image partition regular over $\mathbb{R}^+$.\\
\hspace{1 cm} (b) For every C-set near zero $C$ in $\mathbb{R}^+$, there exists  $\Vec{x} \in (\mathbb{R}^+)^v$ such that $M\Vec{x} \in C^u$. \\
\hspace{1 cm} (c)  For every C-set near zero $C$ in $\mathbb{R}^+$, \{$\Vec{x} \in (\mathbb{R}^+)^v$ :  $M\Vec{x} \in C^u$ \} is a C-set near zero in $ (\mathbb{R}^+)^v$.
\end{theorem}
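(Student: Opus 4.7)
The plan is to establish the cyclic chain $(c) \Rightarrow (b) \Rightarrow (a) \Rightarrow (c)$, mirroring the structure of Theorem \ref{th1} but working in the algebraic framework of ultrafilters concentrated near $0$.

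The implication $(c) \Rightarrow (b)$ is immediate, since any C-set near zero in $(\mathbb{R}^+)^v$ is non-empty.

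For $(b) \Rightarrow (a)$: suppose $\mathbb{R}^+\setminus\{0\}=\bigcup_{i=1}^r D_i$ is a finite coloring and $\delta>0$. I will use partition regularity of the family of C-sets near zero together with the fact that the interval $(0,\delta)\cap \mathbb{R}^+$ itself contains C-sets near zero (every idempotent in the closed subsemigroup of $0^+$ witnessing C-sets near zero has the interval $(0,\delta)$ as a member). This forces some $D_i\cap(0,\delta)$ to be a C-set near zero in $\mathbb{R}^+$, and then applying $(b)$ to it yields $\vec{x}\in(\mathbb{R}^+)^v$ with the entries of $M\vec{x}$ monochromatic and in $(-\delta,\delta)$, so $M$ is IPR/$\mathbb{R}^+$.

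The bulk of the work lies in $(a) \Rightarrow (c)$. I first invoke \cite[Theorem 2.3]{DPb} to replace IPR/$\mathbb{R}^+$ by the equivalent statement that $M$ is IPR near zero over $\mathbb{R}^+$; this reformulation is what makes the ultrafilter machinery near zero applicable. Using the new characterization of C-sets near zero in $\mathbb{R}^+$ developed earlier in the paper, a set $C\subseteq\mathbb{R}^+$ is a C-set near zero iff $C$ belongs to some idempotent $p$ in a specified compact subsemigroup $\mathcal{J}_0$ of $0^+$ (the analog of $J(\beta\mathbb{N})$ for the near-zero semigroup). Given a C-set near zero $C$, pick such an idempotent $p\ni C$. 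The matrix $M$ (with real entries) induces a continuous map $\widetilde M:\beta_d((\mathbb{R}^+)^v)\to \beta_d(\mathbb{R}^u)$; the task is to produce an idempotent $q$ in the analogous compact subsemigroup $\mathcal{J}_0^{(v)}$ of the near-zero semigroup on $(\mathbb{R}^+)^v$ such that the preimage set $\{\vec{x}\in(\mathbb{R}^+)^v : M\vec{x}\in C^u\}$ is a member of $q$; once obtained, this membership is exactly what the new characterization requires for the preimage to be a C-set near zero.

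The main obstacle, and the step which will drive the argument, is the construction of the idempotent $q\in\mathcal{J}_0^{(v)}$ satisfying $\{\vec{x}: M\vec{x}\in C^u\}\in q$. The strategy is to show that the set $T=\{\vec{x}\in(\mathbb{R}^+)^v \cap (0,\delta)^v : M\vec{x}\in C^u\}$ is a $J_0$-set near zero in $(\mathbb{R}^+)^v$ for every $\delta>0$, using IPR near zero together with a first-entries type extraction adapted to the real-entry, near-zero setting, and then to invoke a standard compactness argument to extract an idempotent in $\mathcal{J}_0^{(v)}$ whose collection of members includes all such $T$. The delicate point will be verifying the $J_0$-property for $T$, as this is where the real entries of $M$ (rather than rational) and the simultaneous requirements of being near zero and matching the combinatorial shape demanded by the near-zero Central Sets Theorem must be reconciled.
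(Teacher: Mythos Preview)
Your plan is correct and tracks the paper's argument closely: $(c)\Rightarrow(b)$ is trivial; $(b)\Rightarrow(a)$ goes through partition regularity of C-sets near zero exactly as you describe; and $(a)\Rightarrow(c)$ is handled in the paper by first reducing (via Lemma~\ref{l7}) to showing that $\phi^{-1}[C^u]$ is a $J$-set near zero for every $C\in p$, and then establishing that $J$-set property through the first-entries machinery (Theorems~\ref{th3a}, \ref{th3}, \ref{th4} and Lemma~\ref{l9}), which is precisely the ``first-entries type extraction adapted to the real-entry, near-zero setting'' you anticipate as the delicate point. The only cosmetic difference is that the paper works directly with IPR/$\mathbb{R}^+$ via the first-entries characterization rather than first passing through \cite[Theorem~2.3]{DPb}, and your compactness step to produce the idempotent $q$ must be run over \emph{all} $C\in p$ simultaneously (as in Lemma~\ref{l7}), not just the single given $C$---but your phrase ``all such $T$'' suggests you have this in mind.
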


\section{Preliminary Results}
 
For this paper, we need the algebraic structure of the Stone-Cech Compactification, denoted by $\beta S$, of a discrete semigroup $(S,+)$. We let $\beta S=\{ p: p$ is an ultrafilter on $S\}$ and identify the principal ultrafilters on $S$ with the points of $S$ and so we may assume $S \subseteq \beta S$. Given $A \subseteq S$, $\overline{A}=\{p \in \beta S: A \in p\}$, where $\overline{A}$  denotes the closure of $A$ in $\beta S$ and $\{\overline{A}: A \subseteq S\}$ is a basis for the topology of $\beta S$. The operation $+$ on $S$ extends to an operation, also denoted by $+$, on $\beta S$ so that $\beta S$ is a right topological semigroup with $S$ contained in the topological centre of $\beta S$. That is, for each $p \in \beta S$, the function $\rho_p: \beta S \rightarrow \beta S$ defined by $\rho_p(q)=q+p$ is continuous and for each $x \in S$, the function $\lambda_x: \beta S \rightarrow\beta S$ defined by $\lambda_x(q)=x+q$ is continuous. Given $p,q \in \beta S, A \in p+q$ if and only if $\{x \in S: -x+A \in q\} \in p$, where $-x+A=\{y \in S: x+y \in A\}$. Since $\beta S$ is a compact Hausdorff right topological semigroup, it contains idempotents and a smallest two sided ideal, denoted by $K(\beta S)$, which is the union of all of the minimal left ideals of $\beta S$ and also the union of all the minimal right ideals of $\beta S$. The detailed algebraic structure of $\beta S$ is available in \cite{HSc}.

In this section, we prove some results for subsets $E$ of $S^v, v>1$ where $S$ is a dense subsemigroup of $((0, \infty),+)$. We consider $S$ to be a dense subsemigroup of $((0,\infty),+)$, where $\textit
{dense}$ means with respect to usual topology on $((0,\infty),+)$ and when we use the Stone Cech Compactification of such a semigroup $S^v, v \ge 1$ then we deal with $S_d^v$ which is the set $S^v$ with the discrete topology. \\
In this paper we shall prove results for $v=2$ which can be easily adapted to prove the results are true for any $v >2$.\\
We start with the following definition.
\begin{definition} \label{defA} Let $S$ be a dense subsemigroup of $((0,\infty),+)$ and $v>1$. Then 
$0^+(S^v)=\{ p\in\beta (S_d^v): (\forall \epsilon>0)(((0,\epsilon) \cap S)^v \in p)\}$.  
\end{definition}
\begin{lemma} \label{l2} Let $S$ be a dense subsemigroup of $((0,\infty),+)$, then $0^+(S^v)$, $v >1$ is a compact right topological subsemigroup. 
\end{lemma}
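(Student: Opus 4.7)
The plan is to verify the three defining properties of a compact right topological subsemigroup: that $0^+(S^v)$ is closed (hence compact) in $\beta(S_d^v)$, that it is closed under $+$, and that right translations are continuous on it. The right topological structure is the most immediate, since the ambient $\beta(S_d^v)$ is a right topological semigroup and continuity of $\rho_p$ restricts to any subspace, so this requires no further argument. Nonemptiness follows from density of $S$ in $(0,\infty)$: the family $\{((0,\epsilon)\cap S)^v : \epsilon>0\}$ consists of nonempty sets and is totally ordered by reverse inclusion, so it has the finite intersection property and extends to an ultrafilter $p\in\beta(S_d^v)$, which then lies in $0^+(S^v)$.

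For compactness, I would observe that for each $\epsilon>0$ the set $\overline{((0,\epsilon)\cap S)^v}$ is clopen in $\beta(S_d^v)$, and by the standard identification $A\in p \Leftrightarrow p\in\overline A$, we have
\[
0^+(S^v)=\bigcap_{\epsilon>0}\overline{((0,\epsilon)\cap S)^v}.
\]
This is an intersection of closed subsets of the compact Hausdorff space $\beta(S_d^v)$, hence closed and compact.

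The main work is closure under $+$. Given $p,q\in 0^+(S^v)$ and $\epsilon>0$, I must show $((0,\epsilon)\cap S)^v\in p+q$. By the description $A\in p+q \Leftrightarrow \{x\in S^v:-x+A\in q\}\in p$, it suffices to exhibit a set in $p$ that witnesses this. The natural choice is $B=((0,\epsilon/2)\cap S)^v$, which lies in $p$ since $p\in 0^+(S^v)$. For any $x=(x_1,\dots,x_v)\in B$ each coordinate $x_i$ is less than $\epsilon/2$, so $\epsilon-x_i>\epsilon/2$, which gives the coordinatewise inclusion
\[
((0,\epsilon/2)\cap S)^v \subseteq -x+((0,\epsilon)\cap S)^v.
\]
Since the left-hand side belongs to $q$, so does the right, which shows $B\subseteq\{x\in S^v:-x+((0,\epsilon)\cap S)^v\in q\}$, hence this latter set is in $p$, and therefore $((0,\epsilon)\cap S)^v\in p+q$ as required.

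The only point requiring care is the standard $\epsilon/2$ trick establishing the coordinatewise inclusion above; everything else is routine bookkeeping. Since the argument is uniform in the coordinates, it adapts without modification from $v=2$ to any $v>1$, consistent with the authors' remark that results will be stated for $v=2$ and carried over to general $v$.
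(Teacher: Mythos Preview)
Your proof is correct and is precisely the standard argument the paper is invoking: the paper's own proof is simply the one-line remark that the argument of \cite[Lemma~2.5]{HL} for $0^+(S)$ goes through verbatim for $0^+(S^v)$, and what you have written is exactly that adaptation spelled out in full. The $\epsilon/2$ inclusion you verify coordinatewise is the heart of the matter, and your treatment of nonemptiness, compactness, and the inherited right topological structure is accurate.
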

\begin{proof}
    Proof follows in a similar fashion as in \cite[lemma 2.5]{HL}. 
\end{proof}
As a consequence of Lemma \ref{l2}, $0^+(S^2)$  contains idempotents. It was noted that $0^+(S) \cap K(\beta S_d) = \emptyset$ in \cite {HL}.We now prove the following.  

\begin{lemma} \label{l3} Let $S$ be a dense subsemigroup of $((0,\infty),+)$, then \\
$0^+(S^2) \bigcap K(\beta S_d^2)=\emptyset$.
\end{lemma}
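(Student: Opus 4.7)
The plan is to argue by contradiction: assume there is some $p\in 0^+(S^2)\cap K(\beta S_d^2)$ and exhibit a member of $p$ that is not piecewise syndetic in $S_d^2$. This contradicts the standard characterization of the smallest ideal in terms of piecewise syndetic sets (\cite[Theorem~4.40]{HSc}), which asserts that $p\in K(\beta S_d^2)$ if and only if every member of $p$ is piecewise syndetic in $S_d^2$. That characterization is available verbatim with $S^2$ in place of $S$, because the proof in \cite{HSc} uses only the semigroup structure and the discrete topology.

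The witness is the obvious one. For any fixed $\epsilon>0$, the set $A_\epsilon:=((0,\epsilon)\cap S)^2$ belongs to $p$ by Definition~\ref{defA}. The crucial one-line computation is that for every $\vec t=(t_1,t_2)\in S^2$, the translate $-\vec t+A_\epsilon$ is contained in $(0,\epsilon)\times(0,\epsilon)$: indeed, $(a,b)\in -\vec t+A_\epsilon$ means $(t_1+a,t_2+b)\in A_\epsilon$, so $t_1+a<\epsilon$ and $t_2+b<\epsilon$, and consequently $a,b<\epsilon$. Thus any finite union $\bigcup_{\vec t\in G}(-\vec t+A_\epsilon)$ is trapped inside $(0,\epsilon)^2$, independently of the finite set $G\subseteq S^2$.

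To finish, I would exploit unboundedness of $S$. Pick any $s_0\in S$ and, using that $S$ is a subsemigroup of $((0,\infty),+)$, choose $n\in\ben$ with $ns_0\in S$ and $ns_0>\epsilon$. If $A_\epsilon$ were piecewise syndetic via some finite $G\subseteq S^2$, then applying the definition with $F=\{(ns_0,ns_0)\}$ would furnish $\vec z\in S^2$ satisfying $\vec z+F\subseteq(0,\epsilon)^2$; in particular $z_1+ns_0<\epsilon$, which is impossible since $z_1>0$ and $ns_0>\epsilon$. I do not anticipate a serious obstacle here: the only point that needs care is checking that $-\vec t+A_\epsilon$ lies in $(0,\epsilon)^2$ \emph{uniformly} in $\vec t\in S^2$, which captures the intuition that $A_\epsilon$ is bounded in each coordinate while any piecewise syndetic set must absorb translates of arbitrarily large finite configurations.
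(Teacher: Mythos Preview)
Your argument is correct and in fact yields the slightly stronger conclusion $0^+(S^2)\cap c\ell\,K(\beta S_d^2)=\emptyset$. One small correction: the biconditional in \cite[Theorem~4.40]{HSc} characterizes membership in $c\ell\,K(\beta S_d^2)$ rather than in $K(\beta S_d^2)$ itself; but you only use the forward direction (if $p\in K(\beta S_d^2)$ then every member of $p$ is piecewise syndetic), and that follows immediately from $K\subseteq c\ell\,K$.

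The route, however, is genuinely different from the paper's. The paper extends the inclusion $S^2\hookrightarrow[0,\infty]^2$ to a continuous homomorphism $\alpha:\beta S_d^2\to[0,\infty]^2$, checks that the set $A=\alpha^{-1}\bigl(\{(x,\infty),(\infty,x):x\in[0,\infty]\}\bigr)$ is a nonempty left ideal of $\beta S_d^2$ (hence contains $K(\beta S_d^2)$), and observes that $\alpha$ sends $0^+(S^2)$ to $(0,0)$, so $0^+(S^2)\cap A=\emptyset$. Your approach is more elementary and purely combinatorial: it bypasses the auxiliary compactification $[0,\infty]^2$ and the extension homomorphism entirely, working directly with the thickness obstruction $-\vec t+A_\epsilon\subseteq A_\epsilon$. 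The paper's argument, by contrast, makes the geometric intuition explicit---$K(\beta S_d^2)$ lives ``at infinity'' while $0^+(S^2)$ lives ``at the origin''---and adapts cleanly to higher $v$ and to other target semigroups where an analogous boundary homomorphism is available.
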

\begin{proof}
Consider the identity function $i:S\times S \rightarrow S \times S \subseteq [0,\infty] \times [0, \infty]$.\\
 Here, $[0,\infty]$ is a topological space with respect to the order topology and  $[0,\infty]\times [0,\infty]$ is compact with respect to the product topology. Also, $[0,\infty] \times [0, \infty]$ is a semigroup with respect to the pointwise addition. Further, $\rho_{\vec{x}}: [0,\infty] \times [0,\infty] \rightarrow [0, \infty] \times [0,\infty]$ defined by $\rho_{\vec{x}}(\vec{y})=\vec{y}+\vec{x}$ for all $\vec{y} \in [0,\infty] \times [0,\infty]$ is continuous for all $\vec{x} \in [0,\infty] \times [0,\infty]$. Therefore, $ [0,\infty]\times[0,\infty]$ is a compact right topological semigroup. Since $ i(S\times S)$ is contained in the topological centre of $[0,\infty] \times [0,\infty]$, therefore,  the identity function can be continuously extended to a homomorphism from $\beta(S^2_d)$ to $[0,\infty] \times [0, \infty]$.\\
Let $\alpha : \beta S^2_d \rightarrow [0,\infty] \times [0, \infty]$ be the continuous extension.\\ 
Let $B(S^2) = \{p\in \beta S^2_d : \alpha(p) \notin \{(x,\infty),(\infty, x): x \in [0,\infty]\}\}$ and let $\vec{x} =(x_1,x_2) \in [0,\infty] \times [0,\infty]$. We define\\
$\vec{x}^+ = (x_1,x_2)^+ = \{p \in B(S^2): \alpha(p)=(x_1,x_2)$ and $((x_1,\infty) \cap S) \times ((x_2, \infty) \cap S) \in p\}$\\
Then $p \in \vec{x}^+$ if and only if for every $\epsilon >0,((x_1,x_1+\epsilon) \cap S \times (x_2,x_2+\epsilon) \cap S) \in p$. In particular, when $x_1=x_2=0$, then $\vec{x}^+=0^+(S^2)$.\\
Let $A = \{p \in \beta S^2_d : \alpha(p) \in \{ (x,\infty), (\infty, x): x \in  [0, \infty]\}\}$.\\
Then we claim that $A \neq \emptyset$. Suppose $(x, \infty) \notin \alpha(\beta S_d^2)$ for all $x \in [0,\infty]$. Since $\alpha(\beta S_d^2)$ is closed, therefore $(x,\infty)$ is not a limit point of $\alpha(\beta S_d^2)$. Therefore, there exists open set $(a,b) \times (c, \infty]$, for some $a,b,c \in [0,\infty]$ containing $(x,\infty)$ such that \begin{align*} (a,b) \times (c, \infty] \cap \alpha(\beta S_d^2) = \emptyset \\
 \implies (a,b) \times (c, \infty] \cap \alpha( \overline{S^2})= \emptyset\\
\implies (a,b) \times (c, \infty] \cap \overline{\alpha( S^2)} = \emptyset\\
\implies (a,b) \times (c, \infty] \cap \alpha( S^2) = \emptyset\\
\implies (a,b) \times (c, \infty] \cap  S^2 = \emptyset
\end{align*}
Since $S^2$ is a dense subsemigroup of $((0,\infty) \times (0,\infty), +)$, therefore any $(e,f)$ where $a<e<b, f>c$ is a limit point of $S^2$. So we get a contradiction. And hence the claim. Clearly,  $0^+(S^2) \cap A = \emptyset$. Also $A$ is a left ideal of $\beta S^2_d$ and every minimal left ideal of $\beta S^2_d$ is in $A$ and hence $K(\beta S^2_d)\subseteq A$. Thus  $K(\beta S^2_d) \cap 0^+(S^2) = \emptyset$.
\end{proof}

The proof of Lemma \ref{l3}  can be adapted to prove the following.

\begin{lemma}
     Let $S$ be a dense subsemigroup of $((0,\infty),+)$ then for $v \ge 1$,\\  $0^+(S^v) \bigcap K(\beta S^v) = \phi$.
\end{lemma}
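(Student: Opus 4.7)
The plan is to adapt the argument of Lemma \ref{l3} directly, replacing $[0,\infty]^2$ throughout by $[0,\infty]^v$. First, I would note that $[0,\infty]^v$ equipped with the product of order topologies is compact by Tychonoff, and that coordinatewise addition (with the convention $x+\infty=\infty$) makes it a compact right topological semigroup. The identity embedding $i: S^v \hookrightarrow [0,\infty]^v$ sends $S^v$ into the topological centre, so $i$ extends continuously to a semigroup homomorphism $\alpha : \beta S^v_d \to [0,\infty]^v$.

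I would then define
\[
A = \{ p \in \beta S^v_d : \text{at least one coordinate of } \alpha(p) \text{ equals } \infty \}
\]
and verify three properties in turn. Property (i): $A \neq \emptyset$. If $\alpha(\beta S^v_d) \subseteq [0,\infty)^v$, then fixing any $\vec{x} \in [0,\infty]^v$ with, say, $x_1 = \infty$, there would be a basic open neighbourhood $(c,\infty] \times U_2 \times \cdots \times U_v$ of $\vec{x}$ disjoint from $\alpha(\beta S^v_d)$; since $\alpha$ is continuous from the compact space $\beta S^v_d$, one has $\alpha(\beta S^v_d) = \overline{\alpha(S^v)} = \overline{S^v}$ in $[0,\infty]^v$, so this neighbourhood would also be disjoint from $S^v$, contradicting the density of $S$ in $(0,\infty)$. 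Property (ii): $A$ is a left ideal. For $p \in A$ and $q \in \beta S^v_d$, the homomorphism property $\alpha(q+p) = \alpha(q) + \alpha(p)$ together with $x+\infty = \infty$ in $[0,\infty]$ shows that any infinite coordinate of $\alpha(p)$ persists in $\alpha(q+p)$. Property (iii): $A \cap 0^+(S^v) = \emptyset$. For $p \in 0^+(S^v)$ we have $((0,\epsilon) \cap S)^v \in p$ for every $\epsilon > 0$, forcing $\alpha(p) = (0,\ldots,0)$.

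To finish, I would show that every minimal left ideal $L$ of $\beta S^v_d$ is contained in $A$: picking any $a \in A$ (using (i)) and any $p \in L$, the element $a+p$ lies in $L$ (since $L$ is a left ideal) and in $A$ (by (ii)), so $L \cap A$ is a nonempty left ideal contained in $L$ and equals $L$ by minimality. Since $K(\beta S^v_d)$ is the union of all minimal left ideals, $K(\beta S^v_d) \subseteq A$, which together with (iii) yields $0^+(S^v) \cap K(\beta S^v_d) = \emptyset$. The only step requiring any real verification is the density argument in (i); everything else is an essentially mechanical transcription of the $v=2$ proof, aided by the fact that infinite coordinates are absorbing under addition in $[0,\infty]$.
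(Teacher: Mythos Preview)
Your proposal is correct and follows exactly the route the paper indicates: the paper gives no separate proof for this lemma and simply states that the proof of Lemma~\ref{l3} (the $v=2$ case) can be adapted, which is precisely what you do, with the same ingredients (the continuous extension $\alpha:\beta S_d^v\to[0,\infty]^v$, the set $A$ of ultrafilters with an infinite coordinate, and the ideal argument).

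One small slip in your write-up: in Property~(ii) you verify that $q+p\in A$ whenever $p\in A$, i.e.\ that $A$ is a \emph{left} ideal, but in the finishing step you use $a+p\in A$ with $a\in A$, which is the \emph{right} ideal property. The fix is trivial --- the same computation $\alpha(a+p)=\alpha(a)+\alpha(p)$ with $\infty+x=\infty$ shows that $A$ is in fact a two-sided ideal --- and once that is noted, $K(\beta S_d^v)\subseteq A$ follows immediately from $K$ being the smallest two-sided ideal, without even needing the minimal-left-ideal argument you sketch. (The paper's proof of Lemma~\ref{l3} contains the same elision.)
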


    Let $S$ be a dense subsemigroup of $((0,\infty),+)$. Consider $S^v, v \ge 1$. The set of sequences in $S^v$ converging to zero is denoted by $\mathcal{T}_0^v$.

The notion of $J$-set near zero for any $E \subseteq S$, where $S$ is a dense subsemigroup of $((0,\infty),+)$ was defined in \cite{BT} . We extend this notion and define $J$-set near zero for any $E \subseteq S^v, v>1$.
\begin{definition} \label{defC}Let $S$ be a dense semigroup of $((0,\infty),+)$ and $E\subseteq S^v$, where $v\ge 1$. Then $E$ is a J-set near in $S^v$ zero iff whenever $F \in \mathcal{P}_f(\mathcal{T}_0^v)$ and $\delta > 0$, there exists $\vec{a} \in S^v \cap (0,\delta)^v$ and $H \in \mathcal{P}_f(\mathbb{N})$ such that for each $\vec{f} \in F,\, \vec{a} +  \sum_{t \in H}\vec{f}(t) \in E$.    
\end{definition}

\begin{definition} \label{defB} Let $S$ be a dense subsemigroup of $((0,\infty),+)$. Consider $S^v, v \ge 1$. Then $J_0(S^v)=\{ p\in 0^+(S^v): \forall E \in p, E $ is a J-set near zero \}.     
\end{definition}

By \cite[Theorem 3.10]{BT}, $J_0(S)$ is a compact two sided ideal of $0^+(S)$ and in a similar fashion we can prove that $J_0(S^v)$ is a two sided ideal of $0^+(S^v)$ for any $v > 1$.   Lemma \ref{l4} below proves that $J_0(S^2)$ is a closed subset of $0^+(S^2)$ from which it follows that $J_0(S^2)$ is a compact two sided ideal of $0^+(S^2)$. 
\begin{lemma} \label{l4} ${J_0}(S^2)$ is a closed subset of $0^+(S^2)$. 
\end{lemma}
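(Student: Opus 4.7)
The plan is to show that the complement $0^+(S^2)\setminus J_0(S^2)$ is open in $0^+(S^2)$, from which closedness is immediate. The key observation to keep in mind is that the property ``$E$ is a $J$-set near zero'' in the sense of Definition \ref{defC} is intrinsic to the subset $E \subseteq S^2$ and does not involve any ultrafilter.

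Pick any $p \in 0^+(S^2)\setminus J_0(S^2)$. By Definition \ref{defB}, there exists a set $E \in p$ that fails to be a $J$-set near zero. Consider the basic clopen set $\overline{E} = \{q \in \beta S_d^2 : E \in q\}$ of the Stone topology on $\beta S_d^2$. Since $E \in p$ we have $p \in \overline{E}$, so $U := \overline{E} \cap 0^+(S^2)$ is an open neighborhood of $p$ in the subspace topology on $0^+(S^2)$. For any $q \in U$, the set $E$ belongs to $q$, yet $E$ is not a $J$-set near zero; by the very definition of $J_0(S^2)$ this forces $q \notin J_0(S^2)$. Thus $U$ is disjoint from $J_0(S^2)$, showing $p$ is an interior point of the complement. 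Therefore $J_0(S^2)$ is closed in $0^+(S^2)$.

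Combining this with the compactness of $0^+(S^2)$ from Lemma \ref{l2} and with the already-noted fact that $J_0(S^2)$ is a two-sided ideal of $0^+(S^2)$, we obtain that $J_0(S^2)$ is a compact two-sided ideal, as asserted in the paragraph preceding the lemma. I do not anticipate a genuine obstacle here: the proof is a routine unpacking of the basis of the Stone topology on $\beta S_d^2$, together with the fact that being a $J$-set near zero is a property of the underlying set alone. The argument is the direct transcription from $S$ to $S^2$ of the closedness part of \cite[Theorem 3.10]{BT} for $J_0(S)$.
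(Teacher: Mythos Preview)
Your proof is correct and follows essentially the same approach as the paper: both show the complement is open by taking a witness $E \in p$ that is not a $J$-set near zero and using the basic clopen set $\overline{E}$ to get a neighborhood of $p$ disjoint from $J_0(S^2)$. Your version is in fact cleaner, since the paper unnecessarily (and without justification) writes the witness as a product $E_1 \times E_2$, whereas you work directly with an arbitrary $E \in p$.
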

\begin{proof}
    Let $q \in 0^+(S^2)$ such that $q \notin J_0(S^2)$. Then $\exists  \; B \in q$ such that $B$ is not a $J$-set near zero in $S^2$. Let $B=E_1 \times E_2$ for some $E_1$ and $E_2$ which are subsets of $S$. We will show that $q$ is not a limit point of $J_0(S^2)$.\\
Now, $\{\overline{U} \cap 0^+(S^2): U \subseteq S^2\}$ is a basis of $0^+(S^2)$
\begin{align*}
    \overline{E_1 \times E_2} \bigcap 0^+(S^2) &= \{r \in \beta S^2 : r \in \overline{E_1 \times E_2} \text{ and } r \in 0^+(S^2)\}\\
    &= \{r \in \beta S^2 : \forall \, \epsilon >0,  ((0,\epsilon) \times (0,\epsilon)) \bigcap(E_1 \times E_2) \in r\}.
\end{align*} 
Now, $E_1 \times E_2$ is not a $J$-set near zero. Therefore, for all $\epsilon >0, ((0, \epsilon) \times (0,\epsilon))\bigcap (E_1 \times E_2)$ is also  not a $J$-set near zero.
Now, $\overline{E_1 \times E_2} \bigcap 0^+(S^2)$ is a neighborhood of $q$ such that
$\overline{E_1 \times E_2} \bigcap 0^+(S^2) \bigcap J_0(S^2)= \emptyset$.
Therefore, $q$ is not a limit point of $J_0(S^2)$.

\end{proof}
The subsets of $S^2$ which are J-sets near zero are partition regular, the proof of which easily follows from  \cite[Lemma 3.9]{BT}. 
\begin{lemma} \label{l5}
Let $S$ be a dense subsemigroup of $((0,\infty),+)$ and let $E_1$ and $E_2$ be subsets of $S^2$. If $E= E_1 \cup E_2$ and $E$ is a J-set near zero then either $E_1$ is a J-set near zero in $S^2$ or $E_2$ is a J-set near zero in $S^2$.
\end{lemma}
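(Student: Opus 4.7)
The plan is to mirror the proof of the one-dimensional case \cite[Lemma 3.9]{BT}, observing that Definition \ref{defC} for $S^2$ is the direct vector-valued analog of its one-dimensional counterpart after replacing intervals $(0,\delta)$ by boxes $(0,\delta)^2$ and scalars by vectors. I would argue by contraposition.

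Suppose $E = E_1 \cup E_2$ is a J-set near zero in $S^2$ but neither $E_1$ nor $E_2$ is. For each $i \in \{1,2\}$, extract witnesses $F_i \in \mathcal{P}_f(\mathcal{T}_0^2)$ and $\delta_i > 0$ that obstruct $E_i$ from being a J-set near zero, meaning that no pair $(\vec{a}, H) \in \bigl(S^2 \cap (0,\delta_i)^2\bigr) \times \mathcal{P}_f(\ben)$ satisfies $\vec{a} + \sum_{t \in H} \vec{f}(t) \in E_i$ for every $\vec{f} \in F_i$. The aim is to combine $F_1$ and $F_2$ into a single $F \in \mathcal{P}_f(\mathcal{T}_0^2)$ together with a suitable $\delta > 0$ (say $\delta = \min\{\delta_1,\delta_2\}$) such that any purported $(\vec{a}, H)$ witnessing the J-set property of $E$ against $(F,\delta)$ forces, through a coloring/pigeonhole extraction, a sub-witness entirely inside one of $E_1$ or $E_2$ against the corresponding $F_i$, contradicting the choice of $F_i,\delta_i$.

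The construction of $F$ used in \cite{BT} includes suitable sums and interleavings of the members of $F_1 \cup F_2$; the key point is that addition in the ambient semigroup and the notion of sequences tending to zero both extend coordinate-wise from $S$ to $S^2$. The main obstacle, and indeed the only point requiring attention when transferring the 1-D argument, is verifying that the coordinate-wise smallness condition $\vec{a} \in (0,\delta)^2$ is preserved throughout the merging step. This is automatic: addition in $S^2$ is componentwise, and every $\vec{f} \in \mathcal{T}_0^2$ converges to $\vec{0}$ coordinate-wise, so that the combined witnesses remain in the appropriate box. Consequently no genuinely new combinatorial content is required beyond that of \cite[Lemma 3.9]{BT}, and the remaining work is bookkeeping.
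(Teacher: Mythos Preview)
Your proposal is correct and follows essentially the same approach as the paper, which simply states that the proof of \cite[Lemma 3.9]{BT} can be adapted. Your outline of the contrapositive argument and the observation that the combinatorics carry over verbatim to $S^2$ since addition and convergence are componentwise is exactly the adaptation intended.
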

\begin{proof}
     The proof of \cite[Lemma 3.9]{BT} can be adapted.
\end{proof}
The algebraic characterisation of $J$-sets near zero and $C$- sets near zero for subsets of $S^2$ are given below.
\begin{theorem} \label{thA} Let $S$ be a dense subsemigroup of $((0,\infty),+))$ and $E \subseteq S^2$. Then $\overline{E} \cap J_0(S^2) \neq \phi$ iff $E$ is a J-set near zero.
\end{theorem}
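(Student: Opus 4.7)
One direction is straightforward: if $p \in \overline{E} \cap J_0(S^2)$ then $E \in p$, and by the definition of $J_0(S^2)$ every member of $p$ is a J-set near zero, so in particular $E$ is one. The substantive content is the converse, for which I plan to produce an ultrafilter $p \in 0^+(S^2)$ with $E \in p$ such that every member of $p$ is a J-set near zero, via a Zorn's lemma extension that uses Lemma \ref{l5} (partition regularity) as the engine and one preparatory refinement of the J-set property.

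The preparatory step I would prove asserts: if $A \subseteq S^2$ is a J-set near zero and $\epsilon > 0$, then $A \cap ((0,\epsilon) \cap S)^2$ is still a J-set near zero. Given $F = \{\vec{f}_1,\dots,\vec{f}_l\} \in \mathcal{P}_f(\mathcal{T}_0^2)$ and $\delta > 0$, I would choose an increasing $\phi : \ben \to \ben$ such that for every $i \leq l$ and every $n$, both coordinates of $\vec{f}_i(\phi(n))$ are smaller than $2^{-n-1}\epsilon$; such $\phi$ exists because each $\vec{f}_i$ converges to $0$. The thinned sequences $\vec{g}_i(n) := \vec{f}_i(\phi(n))$ still lie in $\mathcal{T}_0^2$, so applying the hypothesis on $A$ with tolerance $\delta' = \min(\delta,\epsilon/2)$ produces $\vec{a} \in S^2 \cap (0,\delta')^2$ and $H' \in \mathcal{P}_f(\ben)$ with $\vec{a} + \sum_{t \in H'} \vec{g}_i(t) \in A$ for every $i$. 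The choice of $\phi$ forces each coordinate of $\sum_{t \in H'} \vec{g}_i(t)$ to be bounded by $\epsilon/2$, so the witnesses lie in $A \cap ((0,\epsilon) \cap S)^2$; setting $H := \phi[H']$ transports the data back to $F$ and completes the verification.

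With the refinement in hand, I assemble the filter base $\mathcal{B} = \{E\} \cup \{((0,\epsilon) \cap S)^2 : \epsilon > 0\}$; every finite intersection of its elements collapses to $E \cap ((0,\epsilon_0) \cap S)^2$ for some $\epsilon_0 > 0$, hence is a J-set near zero by the preparatory step. Zorn's lemma supplies a maximal filter $\mathcal{F} \supseteq \mathcal{B}$ all of whose members are J-sets near zero. To see $\mathcal{F}$ is an ultrafilter, suppose $A \subseteq S^2$ and $A \notin \mathcal{F}$; maximality forces some $F \in \mathcal{F}$ to have $F \cap A$ not a J-set near zero, whence Lemma \ref{l5} applied to $F = (F \cap A) \cup (F \setminus A)$ shows $F \setminus A$ is a J-set near zero, and the same superset-plus-partition-regularity reasoning shows that extending $\mathcal{F}$ by $S^2 \setminus A$ preserves the J-set-near-zero property of every member, so by maximality $S^2 \setminus A \in \mathcal{F}$. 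Because $\mathcal{F}$ contains every $((0,\epsilon) \cap S)^2$ we have $\mathcal{F} \in 0^+(S^2)$, and because every member of $\mathcal{F}$ is a J-set near zero we have $\mathcal{F} \in J_0(S^2)$; combined with $E \in \mathcal{F}$ this produces the required point of $\overline{E} \cap J_0(S^2)$. I expect the main obstacle to be the thinning step of the second paragraph: it is the only place where the near-zero convergence of elements of $\mathcal{T}_0^2$ is genuinely used, and it is where the argument diverges from its discrete-semigroup analogue.
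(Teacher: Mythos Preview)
Your argument is correct and, at its core, shares the paper's strategy: both use the partition regularity of J-sets near zero (Lemma \ref{l5}) to locate an ultrafilter in $J_0(S^2)$ containing $E$. The paper compresses the entire sufficiency direction into a one-line appeal to \cite[Theorem 3.11]{HSc}, whereas you unwind that black box and carry out the Zorn's-lemma extension by hand. Your preparatory thinning step---that $A \cap ((0,\epsilon)\cap S)^2$ remains a J-set near zero---explicitly secures membership of the resulting ultrafilter in $0^+(S^2)$, a point the paper's proof asserts without separate justification (it can also be recovered a posteriori: any J-set near zero meets every $((0,\epsilon)\cap S)^2$, so the complement of the latter is not a J-set near zero and hence cannot lie in $p$). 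In short, what you have is a more self-contained and slightly more careful version of the paper's proof rather than a genuinely different route.
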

\begin{proof}
    Necessity .
    If $\overline{E} \cap J_0(S^2) \neq \phi, \exists \; p \in 0^+ (S^2)$ such that $E \in p$ and $p \in J_0(S^2)$, it implies $E$ is a J-set near zero. 

    Sufficiency.
    Let $E$ be a J-set near zero.
    To prove that $\overline{E} \cap J_0(S^2) \neq \emptyset$, we need to show that $\exists \; p \in 0^+ (S^2)$ such that $E \in p$ and $\forall \; B \in p, B$ is a J-set near zero in $S^2$.\\
    By Lemma \ref{l5}, J-sets near zero are partition regular. So if $E$ is a J-set near zero, by \cite [Theorem 3.11]{HSc}, there is some $p \in 0^+ (S^2)$ such that $E \in p$ and for every $B \in p, B$ is a J-set near zero.
\end{proof}

\begin{theorem} \label{thB} Let $S$ be a dense subsemigroup of $((0,\infty),+))$ and $E \subseteq S^2$. Then $E$ is a $C$-set near zero if and only if there is an idempotent in  $\overline{E} \cap J_0(S^2)$.
\end{theorem}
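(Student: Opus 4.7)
The plan is to adapt the classical algebraic characterisation of $C$-sets as members of idempotents in $J(S)$ (as in \cite{HSc}) to the near-zero setting, with $0^+(S^2)$ in place of $\beta S_d^2$ and the ideal $J_0(S^2)$ (closed by Lemma \ref{l4} and two-sided as remarked after Definition \ref{defB}) in place of $J(S)$.

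For sufficiency, suppose $p$ is an idempotent in $\overline{E} \cap J_0(S^2)$. Given $F \in \mathcal{P}_f(\mathcal{T}_0^2)$ and $\delta > 0$, I would construct sequences $\{\vec{a}_n\}$ in $S^2 \cap (0,\delta)^2$ and $\{H_n\}$ in $\mathcal{P}_f(\ben)$ by induction. Put $E_0 = E \in p$; at stage $n$ form $E_n^{\star} = \{\vec{x} \in E_n : -\vec{x} + E_n \in p\}$, which is in $p$ by idempotence and, since $p \in J_0(S^2)$, is a $J$-set near zero. Apply Definition \ref{defC} to $E_n^{\star}$ with the tails of each $\vec{f} \in F$ past $\max H_n$ and size parameter $\delta/2^{n+1}$ to obtain $\vec{a}_{n+1} \in S^2 \cap (0,\delta/2^{n+1})^2$ and $H_{n+1}$ with $\min H_{n+1} > \max H_n$ such that $\vec{a}_{n+1} + \sum_{t \in H_{n+1}}\vec{f}(t) \in E_n^{\star}$ for all $\vec{f} \in F$. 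Refining $E_{n+1} = E_n^{\star} \cap \bigcap_{\vec{f} \in F}\bigl(-(\vec{a}_{n+1} + \sum_{t \in H_{n+1}}\vec{f}(t)) + E_n^{\star}\bigr) \in p$ and iterating, a standard induction on $|F'|$ then yields $\sum_{n \in F'}(\vec{a}_n + \sum_{t \in H_n}\vec{f}(t)) \in E$ for every $F' \in \mathcal{P}_f(\ben)$ and every $\vec{f} \in F$, which is the required $C$-set near zero property.

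For necessity, assume $E$ is a $C$-set near zero. Any such set is trivially a $J$-set near zero (take the $C$-set conclusion with $F' = \{1\}$), so by Theorem \ref{thA}, $\overline{E} \cap J_0(S^2) \neq \emptyset$. To extract an idempotent in this set, I would first establish partition regularity of $C$-sets near zero in $S^2$ by a tree-refinement adaptation of Lemma \ref{l5}, and then set
\[ T = \{p \in 0^+(S^2) : \text{every } B \in p \text{ is a } C\text{-set near zero in } S^2\}. \]
Partition regularity would ensure both that $\overline{E} \cap T \neq \emptyset$ and, arguing as in Lemma \ref{l4}, that $T$ is closed in $0^+(S^2)$. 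The subsemigroup property of $T$ would follow by concatenating $C$-set witnesses across $p + q$: for $p, q \in T$ and $B \in p+q$, the set $B_1 = \{\vec{x} : -\vec{x} + B \in q\}$ lies in $p$, and for each $\vec{x} \in B_1$ the $C$-set witnesses for $B_1 \in p$ and those for $-\vec{x} + B \in q$ combine to yield $C$-set witnesses for $B$. Ellis's theorem then produces an idempotent in the non-empty compact subsemigroup $\overline{E} \cap T \subseteq \overline{E} \cap J_0(S^2)$.

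The main obstacle is the necessity direction, specifically the two-layer combination of $C$-set near zero witnesses across $p + q$ while simultaneously enforcing the smallness constraint $\vec{a}_n \in (0, \delta/2^n)^2$. The bookkeeping is noticeably more intricate than in the discrete setting, but should go through by carefully adapting the templates of \cite{HSc} and the near-zero techniques developed in \cite{BT}.
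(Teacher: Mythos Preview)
The paper itself gives no independent argument: its entire proof is the one-line citation ``This follows directly from \cite[Theorem 3.15]{BT}.'' That result is stated for a dense subsemigroup $S$ of $((0,\infty),+)$ and transfers verbatim to $S^2$. So any comparison is really between your sketch and the argument in \cite{BT} (which in turn follows the template of \cite[Chapter~14]{HSc}).

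Your sufficiency direction is the standard $\star$-refinement argument and is fine in outline. The necessity direction, however, has a genuine gap. Even granting that $C$-sets near zero are partition regular and that your set
\[
T=\{p\in 0^+(S^2):\text{every }B\in p\text{ is a $C$-set near zero}\}
\]
is a closed subsemigroup, the set $\overline{E}\cap T$ is \emph{not} in general a subsemigroup: from $p,q\in T$ with $E\in p$ and $E\in q$ one cannot conclude $E\in p+q$. Ellis applied to $T$ yields an idempotent in $T\subseteq J_0(S^2)$, but gives no control forcing that idempotent to lie in $\overline{E}$. There is also a circularity risk in the first step: partition regularity of $C$-sets near zero is most cleanly derived \emph{from} the idempotent characterisation you are trying to establish, so invoking it here needs an independent combinatorial proof you have not supplied.

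The argument in \cite{BT} avoids both problems by never introducing $T$. One starts from the actual $C$-set near zero data for $E$ --- the functions $\alpha$ and $H$ indexed by $\mathcal{P}_f(\mathcal{T}_0^2)$ --- and uses them to build a downward-directed family of subsets of $E$, each of which is still a $J$-set near zero. The intersection of their closures with $J_0(S^2)$ is then shown to be a nonempty compact subsemigroup of $0^+(S^2)$, and this subsemigroup is contained in $\overline{E}$ by construction; Ellis applied there gives the required idempotent. The key difference from your plan is that the subsemigroup is tailored to the specific witnesses for $E$, so containment in $\overline{E}$ is automatic rather than something one must recover after the fact.
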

\begin{proof}
    This follows directly from \cite[Theorem 3.15]{BT}
\end{proof}

\section{Preservation of the notion of C-sets near zero}
In this section we will prove that the pre-image of a $C$ set near zero is again a $C$ set near zero via image partititon regular matrices. We first prove some results and lemmas which will help us do so. We recall the following definition:
\begin{definition}
$0^+(\ber^+)=\{ p\in\beta (\ber_d^+): (\forall \epsilon>0)((0,\epsilon) \in p)\}$.  
\end{definition}
We let $\mathcal{R}_0$ to denote the set of sequences in $\ber$ converging to zero and  $\mathcal{R}_0^+$ to denote the set of sequences in $\ber^+$ converging to zero.\\

The following lemma allows the characterisation of J-sets near zero to be preserved even when we allow $f \in F$ to take on negative values in $\ber$.
\begin{lemma} \label{l6} 
   Let $C$ be a $J$-set near zero in $\mathbb{R}^+$. For each  $F \in \mathcal{P}_f(\mathcal{R}_0)$ and $\delta > 0, $ there exists $ a \in (0,\delta)$ and H $\in \mathcal{P}_f(\mathbb{N})$ such that for each $f \in F,a +  \sum_{t \in H}f(t) \in C$.
\end{lemma}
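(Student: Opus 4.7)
The plan is to reduce the conclusion to the defining property of $J$-set near zero (Definition \ref{defC}) by replacing each real-valued null sequence $f_i$ with a positive null sequence $g_i$ whose tail sums differ from those of $f_i$ by a precisely controlled positive shift.

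Enumerate $F = \{f_1, \ldots, f_k\}$. First, I would introduce an auxiliary sequence $\epsilon \in \mathcal{R}_0^+$ that is summable with small prescribed total, for example $\epsilon_n = \delta/2^{n+2}$, so that $\sum_{n=1}^{\infty} \epsilon_n = \delta/4$. Since each $f_i$ converges to $0$, I can pick inductively a strictly increasing sequence of positive integers $(t_n)_{n \in \ben}$ with $|f_i(t_n)| < \epsilon_n$ for every $i \in \{1, \ldots, k\}$ and every $n$. Then define the shifted sequences $g_i(n) = f_i(t_n) + \epsilon_n$; the choice of $(t_n)$ forces $0 < g_i(n) < 2\epsilon_n$, so each $g_i$ lies in $\mathcal{R}_0^+$.

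Next, apply the assumption that $C$ is a $J$-set near zero to the family $F' = \{g_1, \ldots, g_k\} \in \mathcal{P}_f(\mathcal{R}_0^+)$ with threshold $\delta/2$: this produces $b \in (0, \delta/2)$ and $H \in \mathcal{P}_f(\ben)$ with $b + \sum_{n \in H} g_i(n) \in C$ for every $i$. Set $a = b + \sum_{n \in H} \epsilon_n$ and $H' = \{t_n : n \in H\}$; the injectivity of $(t_n)$ makes $H'$ a well-defined finite subset of $\ben$ of cardinality $|H|$. Then
\[
a + \sum_{t \in H'} f_i(t) \;=\; b + \sum_{n \in H} \bigl(\epsilon_n + f_i(t_n)\bigr) \;=\; b + \sum_{n \in H} g_i(n) \;\in\; C
\]
for every $i$, while $0 < a < \delta/2 + \delta/4 < \delta$, as required.

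The essential point is coordinating two error budgets so that the small positive shift used to make each $g_i$ positive does not push $a$ past $\delta$. Passing to the subsequence $(t_n)$ decouples the (possibly slow) rate at which the $f_i$'s approach zero from the fixed fast decay of $\epsilon_n$; this is what makes the summability of $\epsilon_n$, and hence the bound on $a$, simultaneously available with the domination $|f_i(t_n)| < \epsilon_n$ needed to keep the $g_i$'s positive.
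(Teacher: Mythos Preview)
Your proof is correct and follows essentially the same strategy as the paper's: add a common positive summable null sequence to push every $f_i$ into $\mathcal{R}_0^+$, apply the defining property of a $J$-set near zero, and absorb the shift back into the element $a$. Your version is in fact a bit cleaner---by first passing to the subsequence $(t_n)$ and then using the fixed $\epsilon_n$, you avoid the paper's detour through the ultrafilter characterization (used there only to argue that $C\cap(0,\delta)$ is again a $J$-set near zero), and you keep the bookkeeping for the index set $H'=\{t_n:n\in H\}$ explicit.
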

\begin{proof}
   Let $F \in \mathcal{P}_f(\mathcal{R}_0)$ and $\delta > 0$. For each $f \in F$ and $n \in \ben $, we choose $b(n) \in \ber ^+ \cup \{0\}$ such that $f(n) + b(n) \in \ber^+$ and $\{b(n)\}_{n=1}^\infty$ converges to zero. Then there is a subsequence $\{b(n_k)\}_{k=1}^\infty$ of $\{b(n)\}_{n=1}^\infty$ such that $ \sum_{k=1}^\infty b(n_k)$ converges and $ \sum_{k=1}^\infty b(n_k) < \frac{\delta}{2}$.  Define $h_f(n_k)=b(n_k)+f(n_k)$. Then $ h_f \in \mathcal{R}_0^+ $. Since $C$ is a J-set near zero\\
  Therefore, $\exists \: p \in 0^+(\ber^+)$ such that $p \in J_0(\ber^+) \cap \overline{C}$.
  
  $\implies \exists \, p \in 0^+(\ber^+)$ such that for any $\epsilon>0, (0,\epsilon) \cap C \in p$
  
Let $C' = C \cap (0,\delta)$, then $C'$ is a $J$-set near zero.
  Let $F'=\{\{h_f(n_k)\}_{k=1}^\infty: f \in F\}$. For $ F' \in \mathcal{P}_f(\mathcal{R}_0^+)$ and $\frac{\delta}{2} > 0,\, \exists \; a \in (0, \frac{\delta}{2})$ and $H\in \mathcal{P}_f(\ben)$ such that for all $f \in F$, $a + \sum_{k \in H}h_f(n_k) \in C'$ 
  
  i.e., $a +  \sum_{k \in H}b(n_k) + \sum_{k \in H}f(n_k) \in C \cap (0,\delta)$\\
  Let $a'= a +  \sum_{k \in H}b(n_k)$, then $a' \in (0,\delta)$. Thus, there exists $a' \in (0,\delta)$ and $H\in \mathcal{P}_f(\ben)$ such that for all $f \in F, a' +  \sum_{t \in H}f(t) \in C$. 
\end{proof}

\begin{lemma}\label{l7} Let $u,v \in \ben$ and let $M$ be a $u \times v$ matrix with entries from $\ber$. Given $C \subseteq \ber ^+$, let $D(C)= \{ x \in (\ber ^+)^v: M \Vec{x} \in C^u\}$. Let $p$ be an idempotent in $J_0(\ber^+)$. If for every $C \in p, D(C)$ is a $J$-set near zero in $(\ber^+)^v$, then for every $C \in p, D(C)$ is a $C$-set near zero in $(\ber^+)^v$. 
\end{lemma}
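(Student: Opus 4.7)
The plan is to exhibit a single idempotent $q \in J_0((\ber^+)^v)$ that lies in $\overline{D(C)}$ simultaneously for every $C \in p$; by Theorem \ref{thB} such a $q$ certifies at once that each such $D(C)$ is a $C$-set near zero in $(\ber^+)^v$. To locate $q$, I would work with the set
\[ T \;=\; J_0((\ber^+)^v)\,\cap\,\bigcap_{C \in p}\overline{D(C)} \]
and argue that it is a nonempty closed subsemigroup of the compact right topological semigroup $0^+((\ber^+)^v)$; Ellis's theorem will then deliver the required idempotent inside $T$.

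First I would establish that $T$ is nonempty via a finite intersection argument combined with compactness of $J_0((\ber^+)^v)$ (Lemma \ref{l4}). For any $C_1,\ldots,C_n \in p$, since $p$ is a filter the intersection $C := \bigcap_{i=1}^n C_i$ again lies in $p$, and a component-wise check gives $D(C) = \bigcap_{i=1}^n D(C_i)$; by hypothesis $D(C)$ is a $J$-set near zero in $(\ber^+)^v$, so Theorem \ref{thA} yields a point of $\overline{D(C)} \cap J_0((\ber^+)^v)$, which lies in every $\overline{D(C_i)}$. Closedness of $T$ is immediate from the basic open sets of the hull-kernel topology together with Lemma \ref{l4}.

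The substantive step is showing $T$ is closed under the semigroup operation. Fix $q_1, q_2 \in T$ and $C \in p$; the task is to show $D(C) \in q_1 + q_2$, equivalently that $A := \{\vec{x} \in (\ber^+)^v : -\vec{x} + D(C) \in q_2\}$ lies in $q_1$. Using that $p + p = p$, the set $C^\star := \{c \in C : -c + C \in p\}$ belongs to $p$. For any $\vec{x} \in D(C^\star)$ I would write $M\vec{x} = (c_1,\ldots,c_u)$ with each $c_i \in C^\star$, whence $F := \bigcap_{i=1}^u (-c_i + C)$ is a finite intersection of members of $p$ and so $F \in p$. A direct check shows $D(F) \subseteq -\vec{x} + D(C)$: if $\vec{y} \in D(F)$ then $(M\vec{y})_i \in -c_i + C$ for each $i$, so $(M\vec{x}+M\vec{y})_i = c_i + (M\vec{y})_i \in C$, i.e.\ $M(\vec{x}+\vec{y}) \in C^u$, and $\vec{x}+\vec{y} \in (\ber^+)^v$. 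Since $F \in p$ and $q_2 \in T$, we have $D(F) \in q_2$, whence $-\vec{x} + D(C) \in q_2$; thus $D(C^\star) \subseteq A$, and because $C^\star \in p$ and $q_1 \in T$ force $D(C^\star) \in q_1$, we conclude $A \in q_1$. Combined with the fact that $J_0((\ber^+)^v)$ is a two-sided ideal of $0^+((\ber^+)^v)$, this gives $q_1 + q_2 \in T$.

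The main obstacle I expect is exactly this subsemigroup verification, and within it the translation of the idempotent structure of $p$ into the containment $D(F) \subseteq -\vec{x} + D(C)$; the remainder is a routine compactness/filter argument that does not use the matrix $M$ at all. Once the subsemigroup property is in hand, Ellis's theorem applied to the compact right topological semigroup $T$ produces an idempotent $q \in T$, and Theorem \ref{thB} closes the argument by promoting each $D(C)$ with $C \in p$ from a $J$-set near zero to a $C$-set near zero.
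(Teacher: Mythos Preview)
Your proof is correct, and its global architecture coincides with the paper's: both arguments set
\[
T \;=\; J_0((\ber^+)^v)\,\cap\,\bigcap_{C\in p}\overline{D(C)},
\]
show $T$ is a nonempty compact subsemigroup of $0^+((\ber^+)^v)$, extract an idempotent via Ellis, and invoke Theorem~\ref{thB}. The nonemptiness step (finite intersection property through $D\bigl(\bigcap_i C_i\bigr)=\bigcap_i D(C_i)$ together with Theorem~\ref{thA}) is identical in both.

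The genuine difference is in the subsemigroup verification. The paper introduces the map $\phi(\vec x)=M\vec x$, passes to its continuous extension $\tilde\phi:\beta((\ber^+)^v)\to(\beta\ber)^u$, proves the identification $\bigcap_{C\in p}\overline{D(C)}=\tilde\phi^{-1}(\{\bar p\})$ with $\bar p=(p,\ldots,p)$, and then concludes that $T$ is a subsemigroup simply because $\tilde\phi$ is a homomorphism, $\bar p$ is idempotent, and $J_0((\ber^+)^v)$ is an ideal. You instead argue combinatorially inside $p$: using $C^\star$ and, for each $\vec x\in D(C^\star)$, the set $F=\bigcap_{i=1}^u(-c_i+C)\in p$ to obtain $D(F)\subseteq -\vec x+D(C)$. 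Your route is more elementary in that it never invokes the universal property of $\beta$ or the continuous-extension homomorphism, and it makes the role of the idempotence of $p$ completely explicit; the paper's route is shorter and more conceptual once $\tilde\phi$ is on the table, since the subsemigroup property drops out in one line from $\tilde\phi(q_1+q_2)=\bar p+\bar p=\bar p$. Both reach the same idempotent $q\in T$.
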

\begin{proof} Define $\phi : (\ber ^+)^v \rightarrow \ber^u$ by $\phi(\Vec{x}) = M\Vec{x},  \forall \; \vec{x} \in (\ber^+)^v$ and let $\Tilde{\phi}: \beta (\ber^+)^v \rightarrow (\beta \ber)^u$ be its continuous extension which is also a homomorphism. Let $C \subseteq \ber^+$ . We rewrite, $D(C)= \{ x \in (\ber ^+)^v: M \Vec{x} \in C^u\}= \phi ^{-1}[C^u]$. Now, to show that for every $C \in p, \phi ^{-1}[C^u]$ is a $C$- set near zero in $(\ber^+)^v$, we need to show that for every $C \in p $, there is an idempotent in $\overline{\phi ^{-1}[C^u]} \bigcap J_0((\ber^+)^v)$.  Given that, for every  $C \in p, \phi ^{-1}[C^u]$ is a $J$-set near zero in $(\ber^+)^v$. Therefore, for every $C \in p, \overline{\phi ^{-1}[C^u]} \cap J_0((\ber^+)^v) \neq \emptyset $. Now for every $C \in p, \overline{\phi ^{-1}[C^u]} \cap J_0((\ber^+)^v)$ is compact in $\beta ((\ber^+)^v)$ and hence closed in $0^+((\ber^+)^v)$. Since,  $0^+((\ber^+)^v)$ is compact and $\{\overline{\phi ^{-1}[C^u]} \cap J_0((\ber^+)^v): C \in p\}$ is a collection of closed sets having finite intersection property, therefore, $\bigcap_{C \in p} (\overline{\phi ^{-1}[C^u]} \cap J_0((\ber^+)^v)) \neq \emptyset$. We claim that $\bigcap_{C \in p} \overline{\phi ^{-1}[C^u]} \subseteq \widetilde{\phi}^{-1}({\{\overline{p}}\})$. Let $r \in \beta ((\ber^+)^v)$ such that $r \in \bigcap_{C \in p} \overline{\phi ^{-1}[C^u]} \; \forall \; C \in p$. Then $ \phi ^{-1}[C^u] \in r \; \forall \; C \in p $. Let $\phi ^{-1}[C^u]=B$ for some $B \in r \implies \phi(\phi^{-1}[C^u])=\phi(B) \subseteq \overline{C}^u \; \forall \; C \in p$ and some $B \in r$. Suppose $r \notin \tilde{\phi}^{-1}(\{\vec{p}\}) \implies$ there exists a neighbourhood , say, $W$ of $\Tilde{\phi}(r)$ and a neighbourhood $\overline{D}^u$ of $\vec{p}$ such that $W \cap \overline{D}^u = \emptyset $ where $D \in p$. For the neighbourhood $W$ of $\Tilde{\phi}(r)$, there exists a neighbourhood $\overline{E}$ of $r$, where $E \in r$, such that $\tilde{\phi}(\overline{E}) \subseteq W$. Now, $\tilde{\phi}(E) \subseteq \tilde{\phi}(\overline{E})\subseteq W. \implies \phi(E) \subseteq W \implies \phi(E \cap B) \subseteq\phi(E) \subseteq W$. In particular, $\phi(E \cap B) \subseteq\phi(B) \subseteq \overline{D}^u$. Since $E \in r, B \in r \implies E \cap B \in r \implies \phi(E \cap B)\neq \emptyset \implies W \cap \overline{D}^u\neq \emptyset. $ This is a contradiction and hence our claim. 
So, $ \widetilde{\phi}^{-1}({\{\overline{p}}\}) 
\cap J_0((\ber^+)^v)) \neq \emptyset$. Thus,  $ \widetilde{\phi}^{-1}({\{\overline{p}}\}) \cap J_0((\ber^+)^v))$ is a compact subsemigroup of $0^+((\ber^+)^v)$. So, pick an idempotent $q \in\widetilde{\phi}^{-1}({\{\overline{p}}\}) \cap J_0((\ber^+)^v))$. Given, $C \in p, \overline{C}^u$ is a neighbourhood of $\overline{p}$ where $\overline{p}=(p,\cdots, p)$. Pick $B \in q$ such that $\widetilde{\phi}[B] \subseteq \overline{C}^u$. therefore, $\phi[B] \subseteq C^u$. This shows that $B \subseteq \phi^{-1}[C^u]$.
\end{proof}
\begin{definition}
Let $M$ be a $u \times v$ matrix with entries from $\mathbb{R}$. Then $M$ is a \textit{first entries matrix} if:\\
    (1) No row of $M$ is $\Vec{0}$.\\
    (2) The first nonzero entry of each row is positive.\\
    (3) If the first nonzero entry of any two rows occur in the same column, then those entries are equal.
\end{definition}
If $M$ is a first entries matrix and $c$ is the first nonzero entry of some row of $M$, then $c$ is called a \textit{first entry} of $M$.

\begin{theorem}\label{th3a}
    Let $u,v \in \mathbb{N}$,  and let $M$ be a $u \times v$ matrix with entries from $\mathbb{R}$. The following statements are equivalent:\\
    (a) $M$ is image partition regular over $\mathbb{R}^+$.\\
    (b) There exists $m \in \ben$ , and  a $u \times m$ first entries matrix $B$ such that for all $\vec{y} \in (\mathbb{R}^+)^m$ there exists $\vec{x} \in (\mathbb{R}^+)^v$ such that $M\vec{x}=B\vec{y}$. 
\end{theorem}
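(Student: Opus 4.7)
The plan is to prove the two implications separately, adapting the classical argument for matrices over $\mathbb{Q}$ (as in \cite{HLb} and \cite[Theorem 15.24]{HSc}) to the setting of real entries and $\mathbb{R}^+$. The easier direction is (b) $\Rightarrow$ (a); the direction (a) $\Rightarrow$ (b) requires a genuine construction of the matrix $B$.

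For (b) $\Rightarrow$ (a), the key auxiliary fact is that any first entries matrix $B$ is itself image partition regular over $\mathbb{R}^+$. Over $\mathbb{N}$ this is \cite[Theorem 15.5]{HSc}; the inductive proof there (an application of the Central Sets Theorem to a central subset) carries over essentially verbatim once we substitute central subsets of $\mathbb{R}^+$ for those of $\mathbb{N}$. So given any finite partition of $\mathbb{R}^+$, I would first locate $\vec{y} \in (\mathbb{R}^+)^m$ making the entries of $B\vec{y}$ monochromatic, and then use (b) to produce $\vec{x} \in (\mathbb{R}^+)^v$ with $M\vec{x} = B\vec{y}$, exhibiting the IPR of $M$ over $\mathbb{R}^+$.

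For (a) $\Rightarrow$ (b), I would construct, from the IPR hypothesis on $M$, vectors $\vec{x}^{(1)}, \ldots, \vec{x}^{(m)} \in (\mathbb{R}^+)^v$ with the property that the $u \times m$ matrix $B$ whose $j$-th column is $M\vec{x}^{(j)}$ is a first entries matrix. Once $B$ is built, any $\vec{y} = (y_1, \ldots, y_m) \in (\mathbb{R}^+)^m$ is handled by $\vec{x} = \sum_{j=1}^{m} y_j \vec{x}^{(j)} \in (\mathbb{R}^+)^v$, which satisfies $M\vec{x} = B\vec{y}$ by linearity. The columns $\vec{x}^{(j)}$ are to be built iteratively by repeatedly invoking the IPR of $M$ against a carefully engineered finite coloring of $\mathbb{R}^+$, chosen so that monochromaticity of $M\vec{x}^{(j)}$ forces the first nonzero entries of the rows of $B$ that share a column to coincide.

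The principal obstacle I anticipate is turning the approximate equality of first entries that follows from finite coloring arguments into exact equality, since the classical $\mathbb{Q}$-case proof clears denominators, an operation with no direct real analogue. My proposed workaround is a compactness argument in $\beta \mathbb{R}^+_d$: I would use the central sets characterization of IPR in Theorem \ref{th1} together with \cite[Theorem 2.3]{DPb} to realise the iterative construction inside an idempotent ultrafilter in $0^+(\mathbb{R}^+)$, so that the closure and idempotence of such an ultrafilter supply the exact equalities needed to make $B$ a genuine first entries matrix.
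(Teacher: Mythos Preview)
The paper does not give its own argument here: both implications are simply attributed to \cite[Theorem~4.1]{H}, where Hindman established several equivalent characterizations of IPR/$\mathbb{R}^+$ for real matrices, the first entries condition among them. So your proposal is to be measured against that source rather than against anything in the present paper.

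Your direction (b)$\Rightarrow$(a) is correct and standard. The direction (a)$\Rightarrow$(b), however, has a genuine gap. The difficulty you flag is real---a finite coloring of $\mathbb{R}^+$ cannot by itself force exact equality of first entries---but your proposed remedy does not close it. Theorem~\ref{th1} concerns rational matrices over $\mathbb{N}$ and so is not available here, and sitting inside an idempotent of $0^+(\mathbb{R}^+)$ gives closure under ultrafilter addition, not exact coincidence of real ratios; you have supplied no mechanism by which ``closure and idempotence \dots\ supply the exact equalities''. The route taken in \cite{H} avoids the issue altogether: (a)$\Rightarrow$(b) is not obtained by iterated colorings but by passing through other equivalent conditions---in particular the existence of a nonnegative matrix $G$ with no zero row such that $MG$ is a first entries matrix with all first entries equal to $1$, which is exactly the form invoked later in Case~(iii) of Theorem~\ref{th4}---and these are linked via kernel partition regularity and a columns-type condition, where no approximate-to-exact passage is needed.
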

\begin{proof}
    (a) $\implies$ (b). Follows from \cite [Theorem 4.1]{H}.\\
    (b) $\implies$ (a). Follows from \cite [Theorem 4.1]{H}.
\end{proof}

\begin{theorem}\label{th3}
    Let $u,v \in \mathbb{N}, M$ is a $u \times v$ matrix with entries from $\mathbb{R}$. The following are equivalent:\\
    (a) $M$ is IPR/$\mathbb{R}^+$.\\
    (b) Given any column $\vec{c} \in \ber^u$, the matrix  $M'=\begin{pmatrix}
        M & \vec{c}
    \end{pmatrix}$ is IPR/$\ber^+$.
\end{theorem}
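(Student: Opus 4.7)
The plan is to prove the two directions separately, using the first entries matrix characterization of Theorem~\ref{th3a} as the main tool. The reverse direction is essentially trivial, and the forward direction reduces to a one-line construction once the first entries matrix machinery is applied.

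For $(b) \Rightarrow (a)$, I would apply the hypothesis to the particular choice $\vec{c} = \vec{0} \in \ber^u$. Then $M' = (M \mid \vec{0})$ is IPR/$\ber^+$, and so for any finite coloring of $\ber^+$ there is $\vec{x}' = (\vec{x}, y) \in (\ber^+)^{v+1}$ such that the entries of $M' \vec{x}' = M\vec{x}$ are monochromatic. In particular, $\vec{x} \in (\ber^+)^v$ witnesses IPR/$\ber^+$ for $M$.

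For $(a) \Rightarrow (b)$, I would first use Theorem~\ref{th3a} to obtain a $u \times m$ first entries matrix $B$ such that for every $\vec{y} \in (\ber^+)^m$ there exists $\vec{x} \in (\ber^+)^v$ with $M\vec{x} = B\vec{y}$. The key observation is that the augmented matrix $B' = (B \mid \vec{c})$ of size $u \times (m+1)$ is still a first entries matrix, regardless of the entries of $\vec{c}$: since $B$ has no zero rows, neither does $B'$; the first nonzero entry of each row of $B'$ coincides with that of the corresponding row of $B$ (it lies in one of the first $m$ columns), so conditions (2) and (3) are inherited from $B$. Given $\vec{y}' = (\vec{y}, s) \in (\ber^+)^{m+1}$, I pick $\vec{x} \in (\ber^+)^v$ with $M\vec{x} = B\vec{y}$ using the hypothesis and set $\vec{x}' = (\vec{x}, s) \in (\ber^+)^{v+1}$. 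Then
\[
M' \vec{x}' \;=\; M\vec{x} + s\vec{c} \;=\; B\vec{y} + s\vec{c} \;=\; B'\vec{y}',
\]
so the first entries matrix $B'$ witnesses IPR/$\ber^+$ for $M'$ by the other direction of Theorem~\ref{th3a}.

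The only nontrivial point to check is that appending an arbitrary real column $\vec{c}$ to a first entries matrix $B$ preserves the defining conditions, but this is immediate from the fact that the first nonzero entry of each row stays in columns $1, \ldots, m$. There is no serious obstacle beyond correctly invoking Theorem~\ref{th3a} in both directions.
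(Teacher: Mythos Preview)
Your proposal is correct and follows essentially the same route as the paper: both directions rely on Theorem~\ref{th3a}, and for $(a)\Rightarrow(b)$ you augment the first entries matrix $B$ by the column $\vec c$ and check that $(B\mid\vec c)$ witnesses IPR/$\ber^+$ for $(M\mid\vec c)$. Your write-up is in fact slightly more careful than the paper's, since you explicitly verify that appending $\vec c$ preserves the first entries conditions and you spell out the $(b)\Rightarrow(a)$ direction via $\vec c=\vec 0$ rather than simply declaring it trivial.
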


\begin{proof}
    (a) $\implies$ (b) Given, $\vec{c} \in \mathbb{R}^u$ and $M'=\begin{pmatrix}
        M & \vec{c}
    \end{pmatrix}$. Since $M$ is \textit{IPR/$\mathbb{R}^+$}, by the above Theroem \ref{th3a}, there exists $m \in \ben$ and a $u \times m$ first entries matrix $B$ such that for all $\vec{y} \in (\mathbb{R}^+)^m$ there exists $\vec{x} \in (\mathbb{R}^+)^v$ such that $M\vec{x}=B\vec{y}$. Take $\vec{w} \in (\ber^+)^{v+1}$ such that the first $v$ entries of $\vec{w}$ consist of the first $v$ entries of $\vec{x}$ and the $(v+1)^{th}$ entry is any positive real number, say $z$. Then, $M'\vec{w}=\begin{pmatrix}
        M &\vec{c}
    \end{pmatrix}\begin{pmatrix}
        \vec{x}\\
        z
    \end{pmatrix}= M\vec{x}+\vec{c} z=B\vec{y} + \vec{c}z=\begin{pmatrix}
        B & \vec{c}
    \end{pmatrix}\begin{pmatrix}
        \vec{y}\\
        z
    \end{pmatrix}$ for any $\vec{y} \in (\ber^+)^m$. Hence, $M'\vec{w} =\begin{pmatrix}
        B & \vec{c}
    \end{pmatrix}\vec{z}$ for any $\vec{z} \in (\ber^+)^{m+1}$ and hence $M'\vec{w}=B'\vec{z}$, where $B'$ is a $u \times (m+1)$ first entries matrix. Thus, by the above Theroem \ref{th3a} again,  $M'$ is also \textit{IPR}/$\mathbb{R}^+$.\\
    (b) $\implies$ (a) is trivial.
\end{proof}

\begin{definition}
     Let $(S,+)$ be a semigroup and $A \subseteq S, p \in \beta S$. Then $A^\star(p)=\{s \in A: -s +A \in p\}$
\end{definition}
We shall write $ A^\star$ instead of $A^\star(p)$.
\begin{lemma} \label{l8}
    Let $(S, +)$ be a semigroup, let $p+p=p \in \beta S$ and let $A \in p$. For each $s \in A^\star, -s+A^\star \in p $.
\end{lemma}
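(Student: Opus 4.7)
The plan is to use the standard decomposition $A^\star = A \cap B$, where $B := \{t \in S : -t + A \in p\}$, and then show that each of $-s+A$ and $-s+B$ lies in $p$; the result will follow since $-s + A^\star = (-s+A) \cap (-s+B)$.

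First I would unpack the hypothesis $p = p+p$. By definition of the semigroup operation on $\beta S$, membership $A \in p+p$ is equivalent to $B \in p$, where $B$ is as above. Since $p$ is idempotent and $A \in p$, we conclude $B \in p$, and hence $A^\star = A \cap B \in p$. This also confirms that $A^\star$ is a legitimate ``large'' subset to work with.

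Next, fix $s \in A^\star$. By the very definition of $A^\star$ we already have $-s+A \in p$. To get $-s+B \in p$, I would use idempotency once more: because $-s+A \in p = p+p$, the set
\[
\{t \in S : -t + (-s+A) \in p\}
\]
belongs to $p$. The key identity here is $-t + (-s+A) = -(s+t) + A$, which is a direct check on elements. Hence $\{t \in S : -(s+t)+A \in p\} \in p$, and this set is exactly $-s+B$ by the definition of $B$.

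Finally, I would verify the straightforward set-theoretic identity
\[
-s + A^\star \;=\; -s + (A \cap B) \;=\; (-s+A) \cap (-s+B),
\]
which holds because $s+t \in A \cap B$ iff $s+t \in A$ and $s+t \in B$. Since $p$ is a filter and both $-s+A$ and $-s+B$ lie in $p$, the intersection $-s+A^\star$ is in $p$, completing the proof. There is no real obstacle in this argument; the only point that needs care is the algebraic identity $-t + (-s+A) = -(s+t)+A$, which relies on associativity of $+$ in $S$, and the observation that $s$ itself is an element of $S$ (guaranteed by $s \in A^\star \subseteq A \subseteq S$) so that the left translations $-s+(\cdot)$ are defined in the usual pointwise sense.
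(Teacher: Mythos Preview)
Your argument is correct and is precisely the standard proof of this fact; the paper itself does not give a proof but simply cites \cite[Lemma 4.14]{HSc}, whose proof is exactly the decomposition $A^\star = A \cap B$ and double use of idempotency that you wrote out.
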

\begin{proof}
    \cite[Lemma 4.14]{HSc}
\end{proof}

\begin{lemma} \label{l9} Let $u,v,d \in \mathbb{N}$, let $M$ be a $u \times v$ matrix with entries from $\mathbb{R}$. Let $B$ be a $u \times d$ matrix with entries from $\mathbb{R}$ and assume that for any $\delta>0$, whenever $C$ is a C-set near zero in $\mathbb{R}^+$, and $\{\Vec{b}(n)\}_{n=1}^{\infty}$ is a sequence in  $(\mathbb{R}^+)^d$ which converges to zero, there exists $ \Vec{x} \in  (0, \delta)^v$ and $\Vec{y} \in FS(\{\Vec{b}(n)\}_{n=1}^{\infty})$ such that all entries of  $M\Vec{x} + B\Vec{y}$ are in $C$. Then for any $\delta > 0$, whenever $C$ is a C-set near zero in $\mathbb{R}^+$ and $\{\Vec{b}(n)\}_{n=1}^{\infty}$ is a sequence in  $(\mathbb{R}^+)^d$ which  converges to zero, there exists a sequence $\{\Vec{x}(n)\}_{n=1}^{\infty}$ in $(0, \delta)^v$
such that $\{\Vec{x}(n)\}_{n=1}^{\infty}$ converges to zero  and a sequnce $\{H_n\}_{n=1}^\infty$ in  $\mathcal{P}_f(\ben)$ with max $H_n < min H_{n+1}$ for each $n \in \ben$ such that $FS(\{M\Vec{x}(n) + B \sum_{t \in H_n}\Vec{b}(t)\}_{n=1}^{\infty}) \subseteq C^u$.  
\end{lemma}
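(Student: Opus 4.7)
The plan is to iterate the hypothesis, using the algebraic machinery of idempotents in $J_0(\ber^+)$ to force each partial sum to stay inside $C$. Since $C$ is a C-set near zero, Theorem \ref{thB} yields an idempotent $p\in\overline{C}\cap J_0(\ber^+)$. Set $C^\star=\{s\in C:-s+C\in p\}$; then $C^\star\in p$, and by Lemma \ref{l8}, $-s+C^\star\in p$ for every $s\in C^\star$. I will construct $\vec{x}(n)$ and $H_n$ by recursion on $n$ so that $\vec{x}(n)\in(0,\delta/2^n)^v$, $\max H_n<\min H_{n+1}$, and for every nonempty $F\subseteq\{1,\ldots,n\}$ the partial sum $\vec{w}_F:=\sum_{k\in F}\bigl(M\vec{x}(k)+B\sum_{t\in H_k}\vec{b}(t)\bigr)$ has all $u$ coordinates in $C^\star$.

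For the base step, apply the hypothesis to $C^\star$ (a C-set near zero because the idempotent $p\in\overline{C^\star}\cap J_0(\ber^+)$ and Theorem \ref{thB} applies) with bound $\delta/2$, obtaining $\vec{x}(1)$ and $H_1$ with $\vec{w}_{\{1\}}\in(C^\star)^u$.

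For the inductive step, assume $\vec{x}(1),\ldots,\vec{x}(n)$ and $H_1,\ldots,H_n$ have been selected. Define
$$E_n\;=\;C^\star\,\cap\,\bigcap_{\emptyset\neq F\subseteq\{1,\ldots,n\}}\,\bigcap_{i=1}^{u}\bigl(-w_{F,i}+C^\star\bigr),$$
a finite intersection of members of $p$ by Lemma \ref{l8}, so $E_n\in p$. Because $p$ is an idempotent of $J_0(\ber^+)$ lying in $\overline{E_n}$, Theorem \ref{thB} shows that $E_n$ is itself a C-set near zero. Now apply the hypothesis to $E_n$, with bound $\delta/2^{n+1}$ and the tail sequence $\{\vec{b}(k+\max H_n)\}_{k=1}^\infty$, producing $\vec{x}(n+1)\in(0,\delta/2^{n+1})^v$ and a finite $H_{n+1}\subseteq\ben$ with $\min H_{n+1}>\max H_n$ such that $M\vec{x}(n+1)+B\sum_{t\in H_{n+1}}\vec{b}(t)\in E_n^u$. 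Membership in $E_n^u$ is exactly the assertion that adding this new block to any previously built $\vec{w}_F$ yields a vector in $(C^\star)^u$, so the invariant extends to nonempty $F\subseteq\{1,\ldots,n+1\}$. Since $\vec{x}(n)\in(0,\delta/2^n)^v$, we have $\vec{x}(n)\to\vec{0}$, and $(C^\star)^u\subseteq C^u$ delivers the required containment.

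The main obstacle is ensuring that the auxiliary set $E_n$, which encodes the combinatorial constraint that the next block extends every existing partial sum, remains a C-set near zero so that the hypothesis can be reapplied. This is exactly what Theorem \ref{thB} provides: any set belonging to an idempotent of $J_0(\ber^+)$ is automatically a C-set near zero. The coordinate bookkeeping and the index shift that secures $\max H_n<\min H_{n+1}$ are routine.
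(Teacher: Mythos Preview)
Your proposal is correct and follows essentially the same approach as the paper: pick an idempotent $p\in\overline{C}\cap J_0(\ber^+)$, pass to $C^\star$, and recursively apply the hypothesis to the intersection of $C^\star$ with the finitely many shifts $-w_{F,i}+C^\star$ (which remains in $p$ and hence is a C-set near zero), using a shifted tail of $\{\vec b(n)\}$ to secure $\max H_n<\min H_{n+1}$. The only cosmetic differences are your use of $\delta/2^n$ rather than the paper's $\delta/n$, and your explicit coordinate-wise formulation of the intersection, which is in fact cleaner than the paper's shorthand $\bigcap_{\vec a\in E}(-\vec a+C^\star)$.
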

\begin{proof}
Let $C$ be a C-set near zero in $\mathbb{R}^+$ 
Therefore, there exists an idempotent $p \in J_0(\ber^+) \cap \overline{C}$. Let $C^\star=\{x \in C: -x +C \in p\}$. Then $C^\star \in p$. Also by Lemma \ref{l8}, if $x \in C^\star$ then $-x+C^\star \in p$. Now $C^\star$ is also a C-set near zero in $\ber^+$. So, pick $\Vec{x}(1) \in (0,\delta)^v$ and $\Vec{y}(1) \in FS(\{\Vec{b}(n)\}_{n=1}^{\infty})$ such that all entries of  $M\Vec{x}(1) + B\Vec{y}(1)$ are in $C^\star$. Pick $H_1 \in \mathcal{P}_f(\ben)$ such that $\Vec{y}(1) =\sum_{t \in H_1}\Vec{b}(t)$, i.e., the result is true for $n=1$ for $\{\vec{x}(t)\}_{t=1}^{n}$.\\
Let $n \in \ben$ and let $\Vec{x}(t)=(x_1(t), x_2(t),...,x_v(t))$ and assume that we have chosen $\{\Vec{x}(t)\}_{t=1}^{n}$ in $(0,\delta)^v$ and $\{H_t\}_{t=1}^{n}$ in  $\mathcal{P}_f(\ben)$  such that for each $i \in \{1,2,...,v\}, x_i(t)<\frac{\delta}{t}$ for each  $t \in \{1,2,...,n\}$ and max $H_t <$ min $H_{t+1}$ for all $t \in \{1,2,..., n-1\}$ and for each $F \in \mathcal{P}_f(\{1,2,\cdots, n\})$, all entries of $\sum _{t \in F}(M\Vec{x}(t) + B \sum_{k \in H_t}\Vec{b}(k))$ are in $C^\star$. Let $\vec{z}(t)=M\Vec{x}(t) + B \sum_{k \in H_t}\Vec{b}(k)$. We wish to find $\vec{x}(n+1) \in (0,\delta)^v$ and $H_{n+1} \in \mathcal{P}_f(\ben)$ with  max $H_n <$ min $H_{n+1}$  and for each $F \in \mathcal{P}_f(\{1,2,\cdots, n+1\}), \sum _{t \in F}(M\Vec{x}(t) + B\sum_{k \in H_t}\Vec{b}(k)) \subseteq C^u$. Let $E= FS(\{\vec{z}(t)\}_{t=1}^n)$. We want  some  $\vec{x}(n+1) \in (0,\delta)^v$ and $H_{n+1} \in \mathcal{P}_f(\ben)$ with  max $H_n <$ min $H_{n+1}$ such that all entries of  $\vec{z}(n+1)=(M\Vec{x}(n+1) + B \sum_{k \in H_{n+1}}\Vec{b}(k))$ are in $ C^\star$ and also such that $\vec{a}+ \vec{z}(n+1) \in (C^\star)^u \; \forall \; \vec{a} \in E$.
i.e., we require $\vec{z}(n+1) \in -\vec{a}+ (C^\star)^u \; \forall \; \vec{a} \in E$.\\
i.e., we want  $\vec{z}(n+1) \in (\bigcap_{\vec{a} \in E} -\vec{a}+ (C^\star)^u) \cap C^\star$ such that $\vec{z}(n+1)=(M\Vec{x}(n+1) + B \sum_{k \in H_{n+1}}\Vec{b}(k))$ for some  $\vec{x}(n+1) \in (0,\delta)^v$ and $H_{n+1} \in \mathcal{P}_f(\ben)$ with  max $H_n <$ min $H_{n+1}$. Take $C_1= (\bigcap_{\vec{a} \in E} -\vec{a}+ C^\star) \cap C^\star$. Then $C_1$ is a C set near zero. Therefore, by the given condition, there exists $\vec{x}(n+1) \in (0, \frac{\delta}{n+1})^v, \vec{y} \in FS(\{\vec{b}(t)\}_{t=m}^\infty$) where $m$= max $H_n +1$ such that $M\vec{x}(n+1) + B \vec{y} \subseteq C_1^u$. Take $H_{n+1} \in \mathcal{P}_f(\mathbb{N})$ such that $\vec{y}= \sum_{t \in H_{n+1}}\vec{b}(t)$ then
$M\vec{x}(n+1)+ B \vec{y} \subseteq ((-\vec{a} + (C^\star)^u)$ for all $\vec{a} \in E$.\\
i.e., $ \vec{a} + M\vec{x}(n+1)+ B \vec{y} \subseteq (C^\star)^u$ for all $\vec{a} \in E$.\\ 
i.e., FS $(\{M\Vec{x}(t) + B \sum_{k \in H_t}\Vec{b}(k)\}_{t=1}^{n+1}) \subseteq C^u$.\\
\end{proof}
For a dense subsemigroup $S$ of $((0,\infty),+)$,  $IP$ sets near zero have been defined  for $A \subseteq S$ in \cite[Definition 2.1]{DP}. We introduce the following definition for any $A \subseteq S^v, v \ge 1$.

\begin{definition} \label{defC}Let $S$ be a dense subsemigroup of $((0,\infty),+)$ and $E\subseteq S^v$, where $v\ge 1$. Then $E$ is a weak $IP$-set near zero in $S^v$ if there exists a sequence $\{\Vec{x}(n)\}_{n=1}^{\infty}$ in  $S^v$ which converges to zero and $FS(\{ \Vec{x}(n)\}_{n=1}^\infty) \subseteq E$.   
\end{definition}

\begin{theorem} \label{th4}
  Let $u,v,d \in \mathbb{N}$, let $M$ be a $u \times v$ matrix with entries from $\mathbb{R}$ which is $IPR/\ber^+$. Let $B$ be a $u \times d$ matrix with entries from $\mathbb{R}$. Let $C$ be a $C$-set near zero in $\mathbb{R}^+$ and $U$ be a weak $IP$-set near zero in $(\ber^+)^d$, then for any  $ \delta >0$, there exists $\Vec{x} \in  (0, \delta)^v$ and $\Vec{y} \in U$ such that all the entries of  $M\Vec{x} + B\Vec{y}$ are in $C$.   
\end{theorem}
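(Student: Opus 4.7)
The plan is to combine two reductions with an inductive argument exploiting the $J$-set-near-zero property of $C$: first, Theorem~\ref{th3a} lets us reduce to the case that $M$ is itself a first entries matrix; second, the passage from $C$ to the derivative set $C^\star$ (Lemma~\ref{l8}) allows shifts to be absorbed inside $C$ while retaining the $C$-set-near-zero structure.

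Since $U$ is a weak $IP$-set near zero in $(\ber^+)^d$, fix a sequence $\{\vec{b}(n)\}_{n=1}^\infty$ in $(\ber^+)^d$ with $\vec{b}(n)\to\vec{0}$ and $FS(\{\vec{b}(n)\}_{n=1}^\infty)\subseteq U$. Any finite sum $\vec{y}=\sum_{t\in H}\vec{b}(t)$ with $H\in\mathcal{P}_f(\ben)$ lies in $U$, so it suffices to produce such $H$ together with $\vec{x}\in(0,\delta)^v$ satisfying $M\vec{x}+B\sum_{t\in H}\vec{b}(t)\in C^u$. Setting $f_i(n)=(B\vec{b}(n))_i$ for each $i\in\{1,\ldots,u\}$ yields a finite family $\{f_i\}_{i=1}^u$ of sequences in $\mathcal{R}_0$, and the goal rewrites as finding $\vec{x}$ small and $H$ with $(M\vec{x})_i+\sum_{t\in H}f_i(t)\in C$ for every $i$.

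Next, reduce to a first entries matrix: since $M$ is $IPR/\ber^+$, Theorem~\ref{th3a} supplies a $u\times m$ first entries matrix $\tilde N$ such that for each $\vec{w}\in(\ber^+)^m$ there is $\vec{x}\in(\ber^+)^v$ with $M\vec{x}=\tilde N\vec{w}$. The standard proof of Theorem~\ref{th3a} in fact realises this correspondence linearly as $\vec{x}=T\vec{w}$ for a non-negative matrix $T$, so choosing $\vec{w}$ sufficiently small forces $\vec{x}\in(0,\delta)^v$. It therefore suffices to prove the theorem with $M$ replaced by $\tilde N$.

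For a $u\times v$ first entries matrix $M$, induct on $v$. The base case $v=1$ is handled directly: the defining axioms force $M=c\vec{1}$ for a unique $c>0$, and Lemma~\ref{l6} applied to $\{f_i\}_{i=1}^u$ with threshold $c\delta$ yields $a\in(0,c\delta)$ and $H$ with $a+\sum_{t\in H}f_i(t)\in C$ for every $i$; then $x_1=a/c$ works. For $v>1$, partition the rows into $R_1$ (whose first nonzero entry occurs in column 1, all equal to a common $c_1>0$) and $R_1^c$. Deleting column 1 and restricting to $R_1^c$ yields a first entries matrix with $v-1$ columns; apply the induction hypothesis to it with $C$ replaced by the smaller $C$-set near zero $C^\star$ (Lemma~\ref{l8}) to obtain $(x_2,\ldots,x_v)\in(0,\delta)^{v-1}$ and $H$ so that the $R_1^c$-row images lie in $C^\star$. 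A final application of Lemma~\ref{l6} then picks $x_1\in(0,\delta)$ so the $R_1$-row images lie in $C$, using the $C^\star$-to-$C$ absorption from Lemma~\ref{l8} to ensure the shift by $c_1x_1$ stays inside $C$. The main obstacle is this inductive coordination---arranging a single $\vec{x}$ and a single $H$ to work for both $R_1$ and $R_1^c$ while keeping $\vec{x}\in(0,\delta)^v$---which is resolved precisely by layering through $C^\star$.
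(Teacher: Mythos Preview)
Your reduction to first entries matrices (via Theorem~\ref{th3a} and the non-negative matrix $G$) and the base case $v=1$ are fine and match the paper.  The genuine problem is in the inductive step.

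After invoking the induction hypothesis on the $R_1^c$-block you obtain \emph{one} vector $(x_2,\ldots,x_v)$ and \emph{one} set $H$.  You then write ``a final application of Lemma~\ref{l6} then picks $x_1\in(0,\delta)$ so the $R_1$-row images lie in $C$''.  But Lemma~\ref{l6} does not let you choose a constant $a$ compatible with a \emph{prescribed} $H$; it produces its own $a$ \emph{and} its own $H'$.  For each $i\in R_1$ the quantity you must place in $C$ is
\[
c_1x_1+\underbrace{\textstyle\sum_{j\ge 2}a_{ij}x_j+\sum_{t\in H}f_i(t)}_{=:r_i},
\]
with the $r_i$ already fixed real numbers that need not lie in $C^\star$, in $C$, or even in $\ber^+$.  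The $C^\star$-absorption of Lemma~\ref{l8} does not help: it says that if $s\in C^\star$ then $-s+C^\star\in p$, but here the $r_i$ for $i\in R_1$ are not known to be in $C^\star$, and $c_1x_1$ is not being chosen from a member of $p$.  So nothing guarantees a single $x_1\in(0,\delta)$ landing all $c_1x_1+r_i$ in $C$ simultaneously.  The ``main obstacle'' you name---coordinating a single $\vec{x}$ and a single $H$---is not actually resolved by layering through $C^\star$.

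What the paper does (and what your sketch is missing) is to \emph{iterate} the induction hypothesis via Lemma~\ref{l9}: instead of a single pair $(\vec{z},H)$ for the $R_1^c$-block, one extracts sequences $\{\vec{z}(n)\}$ and $\{H_n\}$ with $FS\bigl(\{D\vec{z}(n)+B_1\sum_{t\in H_n}\vec{b}(k_t)\}\bigr)\subseteq C^w$.  Then, for the $R_1$-rows one defines $g_i(n)=\vec{e}_i\cdot\vec{z}(n)+\vec{s}_i\cdot\sum_{t\in H_n}\vec{b}(k_t)\in\mathcal{R}_0$, applies Lemma~\ref{l6} to the $g_i$ to obtain $m$ and a finite $K$, and sets $x_1=m/c_1$, $(x_2,\ldots,x_v)=\sum_{n\in K}\vec{z}(n)$, and $\vec{y}=\sum_{n\in K}\sum_{t\in H_n}\vec{b}(k_t)$.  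The $R_1^c$-rows land in $C$ because the relevant expression is an $FS$-sum over $K$; the $R_1$-rows land in $C$ by the choice of $m,K$.  The iteration step (Lemma~\ref{l9}) is exactly where the idempotent and the set $C^\star$ are used.  Your induction can be made to work on columns as well as on rows, but only after inserting this $FS$-iteration; a single application of the induction hypothesis is not enough.
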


\begin{proof}
       Since,  $U$ is a weak $IP$ set near zero in $(\ber^+)^d$, there exists a sequence $\{\Vec{b}(n)\}_{n=1}^{\infty}$ in  $(\ber^+)^d$ which converges to zero and $FS(\{ \Vec{b}(n)\}_{n=1}^\infty) \subseteq U$. Therefore, it is enough to show that given $\delta >0$, a sequence $\{\Vec{b}(n)\}_{n=1}^{\infty}$ in  $(\ber^+)^d$ which converges to zero, there exists $\Vec{x} \in  (0, \delta)^v$ and $\Vec{y} \in FS(\{ \Vec{b}(n)\}_{n=1}^{\infty})$ such that all the entries of  $M\Vec{x} + B\Vec{y}$ are in $C$. 
       
    We shall first prove our theorem for the case in which $M$ is a first entries matrix.
    
    Case (i) Suppose $M$ has a column whose entries are all equal to a positive real number, say $c$. We shall call such a column a constant column and denote it by $\vec{c}$. Let the first column of $M$ be a constant column. If not, the columns of $M$ can be interchanged to make this column the first column of $M$. Let $\Vec{s_1}, \Vec{s_2,}, \cdots, \Vec{s_u}$ denote the rows of $B$. Assume first that $v=1$. For $i \in \{1,2,\cdots, u\}$, define $f_i: \ben \rightarrow \ber $ by $f_i(n)=\Vec{s_i}.\Vec{b}(n)$. Then $f_i \in \mathcal{R}_0$. If $c \ge 1$, by Lemma \ref{l6}, for  $F \in \mathcal{P}_f(\mathcal{R}_0)$ and $\delta > 0, \exists \; m \in (0,\delta)$ and $H \in \mathcal{P}_f(\mathbb{N})$ such that for each $f_i \in F,m +  \sum_{t \in H}f_i(t) \in C$. Choose $\Vec{x}=(\frac{m}{c})$ and $\Vec{y}= \sum_{t \in H}\Vec{b}(t)$. If $0<c<1$, then for $c\delta >0$, by Lemma \ref{l6} for  $F \in \mathcal{P}_f(\mathcal{R}_0), \exists \; m' \in (0,c\delta)$ and $H' \in \mathcal{P}_f(\mathbb{N})$ such that for each $f_i \in F,m' +  \sum_{t \in H'}f_i(t) \in C$. Choose $\Vec{x}=(\frac{m'}{c})$ and $\Vec{y}= \sum_{t \in H'}\Vec{b}(t)$. The conclusion then holds.
    
    Now, assume that $v>1$. Let  $   M = \begin{pmatrix}
      c & a_{12} \cdots & a_{1v} \\
      \vdots & \vdots & \vdots \\
      c & a_{u2} \cdots & a_{uv}
      \end{pmatrix}$. \\
     Let $\delta>0$. Since, $\{\vec{b}(n)\}_{n=1}^\infty$  converges to zero, therefore  there exists a subsequence $\{\vec{b}(n_k)\}_{k=1}^{\infty}$ of $\{\vec{b}(n)\}_{n=1}^\infty$ in $(0,\frac{\delta}{2^k})^d $ such that for any $H \in \mathcal{P}_f(\mathbb{N}),$ $\sum_{k \in H} \vec{b}(n_k) \in (0, \delta)^d$. Let $\vec{b}(n)=(b_1(n), b_2(n),...,b_d(n)).$\\
      Define $f_i: \ben \rightarrow \ber$ as $f_i(t)=b_1(n_t)\sum_{j=2}^v a_{ij} + \Vec{s_i}\cdot \Vec{b}(n_t)$ for $i=\{1,2,...,u\}$. Then $\{f_i(n)\}_{n=1}^{\infty}$ converges to zero. \\
     By Lemma \ref{l6}, for $\delta > 0, \exists \; m \in (0,\delta)$ and H $\in \mathcal{P}_f(\mathbb{N})$ such that for each $i \in \{1,2,\cdots, u\}, m +  \sum_{t \in H}f_i(t) \in C$.\\
     The case for $c \ge 1$ is given below. The case for $0<c<1$ can be done in a similar manner as discussed above for the case $v=1$.\\
        Let
     $\Vec{x}=\begin{pmatrix}
         \frac{m}{c}\\
          \sum_{t\in H}b_1(n_t)\\
         \vdots\\
        \sum_{t\in H}b_1(n_t)\\
     \end{pmatrix}$ and $\Vec{y}= \sum_{t \in H}\vec{b}(n_t)$.\\
     Then, 
   \begin{align*}   M\Vec{x} + B\Vec{y} &=  \begin{pmatrix}
      c & a_{12} \cdots & a_{1v} \\
      \vdots & \vdots & \vdots \\
      c & a_{u2} \cdots & a_{uv}
      \end{pmatrix}
      \begin{pmatrix}
       \frac{m}{c}\\
          \sum_{t\in H}b_1(n_t)\\
         \vdots\\
        \sum_{t\in H}b_1(n_t)\\
      \end{pmatrix} + 
      \begin{pmatrix}
          \Vec{s_1}\\
          \vdots\\
          \Vec{s_u}
      \end{pmatrix}
      \sum_{t \in H}\Vec{b}(n_t)\\
    &= \begin{pmatrix}
      m + (\sum_{t \in H}b_1(n_t))\sum_{j=2}^{v}a_{1j} + \Vec{s}_1\cdot \sum_{t \in H}\vec{b}(n_t)\\
      \vdots \\
      m + (\sum_{t \in H}b_1(n_t))\sum_{j=2}^{v}a_{uj} + \Vec{s}_u\cdot \sum_{t \in H}\vec{b}(n_t)
      \end{pmatrix}
      \hspace{1 cm}\\
      &=
       \begin{pmatrix}
      m+ \sum_{t \in H} f_1(t) \\
      \vdots \\
       m+ \sum_{t \in H} f_u(t)
      \end{pmatrix} 
      \end{align*} 
      Thus, all entries of $M\Vec{x} + B\Vec{y}$ are in $C$\\
Case (ii) Suppose $M$ does not have a constant column, then, if $u=1$, the result is true from case(i). So assume $u>1$.  If $v=1$, then $M$ being a first entries matrix has no row equal to $\Vec{0}$ so $M$ consists of a single column, a constant column which is case (i) so we assume $v>1$. If the first column of $M$ is a zero column, we can interchange it with a non zero column of $M$, thus we may further assume the first column of $M$ is not identically zero either. \\
We now assume that the result holds for all matrices whose number of rows is less than $u$. We can write $M$ in block form as  $M = \begin{pmatrix}
      \Vec{0} & D \\
        \Vec{c} & E
      \end{pmatrix}$ 
      where $\Vec{0}$ is $w \times 1$ zero matrix, $\Vec{c}$ is $(u-w) \times 1$ constant matrix, $D$ is $w \times (v-1)$ first entries matrix with entries from $\ber$ for a positive integer $w<u$, $E$ is $(u-w) \times (v-1)$ matrix with entries from $\ber$. \\
      Let $B= \begin{pmatrix}
          B_1\\
          B_2
      \end{pmatrix}$ , where $B_1$ is $w \times d$ matrix, $B_2$ is $(u-w) \times d$ matrix. Since, $\{\vec{b}(n)\}_{n=1}^\infty$  converges to zero, therefore  there exists a subsequence $\{\vec{b}(k_t)\}_{t=1}^{\infty}$ of $\{\vec{b}(k)\}_{k=1}^\infty$ such that $\sum_{t=1}^\infty \vec{b}(k_t)$ converges. By induction hypotheses, and Lemma \ref{l9}, we can select a sequence   $\{H_n\}_{n=1}^{\infty}$ in  $\mathcal{P}_f(\ben)$ with max $H_n<$ min $H_{n+1}$ for every $n \in \ben$ and a sequence  $\{\Vec{z}(n)\}_{n=1}^{\infty}$ in $(0, \delta)^{v-1}$, which converges to zero such that  $FS(\{D\Vec{z}(n) + B_1. \sum_{t \in H_n}\Vec{b}(k_t)\}_{n=1}^{\infty}) \subseteq C^w$. \\
      Since $\{\Vec{z}(n)\}_{n=1}^{\infty}$ converges to zero, therefore, there exists a subsequence $\{\Vec{z}(n_k)\}_{k=1}^{\infty}$ of $\{\Vec{z}(n)\}_{n=1}^{\infty}$ such that $ \sum _{k=1}^{\infty}\Vec{z}(n_k)$ converges. In particular, for $\delta >0$ we can choose a subsequence  $\{\Vec{z}(n_k)\}_{k=1}^{\infty}$ such that  $\Vec{z}(n_k) \in (0,\frac{\delta}{2^k})^{v-1}$ for all $k>0$. Then for any $H \in \mathcal{P}_f(\ben)$, we have  $ \sum _{k \in H} \Vec{z}(n_k) \in (0,\delta)^{v-1}$.\\
      Let $\Vec{e}_{w+1}, \cdots, \Vec{e}_u$ denote the rows of $E$ and let $\Vec{s}_{w+1}, \cdots, \Vec{s}_u$ denote the rows of $B_2$.
      For each  $i \in \{w+1, \cdots, u\}$, we define $g_i: \ben \longrightarrow \ber$ by $g_i(n)=\Vec{e_i}.\Vec{z}(t_n) +\Vec{s_i}. \sum_{t \in H_n}\Vec{b}(k_t)$. Since $ \sum_{t=1}^{\infty}\Vec{b}(k_t)$ converges, therefore $\{ \sum_{t \in H_n} \Vec{b}(k_t)\}_{n=1}^{\infty}$ converges to $\Vec{0}$. 
      By Lemma \ref{l6}, for $\delta > 0, \exists \; m \in (0,\delta)$ and  $K \in \mathcal{P}_f(\mathbb{N})$ such that for each $i \in \{w+1,w+2, \cdots, u\}$, $m +  \sum_{n \in K}g_i(n) \in C$. The case for $c \ge 1$ is given below. The case for $0<c<1$ can be done in a similar manner as was discussed in case (i).\\
Choose $\Vec{x}=\begin{pmatrix}
          \frac{m}{c}\\
         \sum_{n \in K}\Vec{z}(t_n)
      \end{pmatrix}$ and $\Vec{y}= {\sum_{n \in K} \sum_{t \in H_n }\vec{b}(k_t)}$. Clearly $\vec{x} \in (0,\delta)^v$.  Then, \begin{align*}
          M\Vec{x}+B\Vec{y} &= 
           \begin{pmatrix}
              \Vec{0} & D \\
              c & \Vec{e}_{w+1}\\
              \vdots & \vdots \\
              c & \Vec{e}_{u}
          \end{pmatrix}
          \begin{pmatrix}
              \frac{m}{c} \\
              \sum_{n \in K}\Vec{z}(t_n)\\
          \end{pmatrix}
          +
          \begin{pmatrix}
              B_1 \\
              \Vec{s}_{w+1}\\
              \vdots \\
              \Vec{s}_u
          \end{pmatrix}
          \begin{pmatrix}
              \sum_{n \in K}\sum_{t \in H_n}\Vec{b}(k_t)
          \end{pmatrix} \\
          &= \begin{pmatrix}
              \sum_{n \in K}(D\Vec{z}(t_n) + B_1\sum_{t \in H_n} \Vec{b}(k_t) )\\
              m + \Vec{e}_{w+1} \sum_{n \in K}\Vec{z}(t_n) + \Vec{s}_{w+1}\cdot \sum_{n \in K}\sum_{t \in H_n}\Vec{b}(k_t)\\
              \vdots \\
              m + \Vec{e}_{u} \sum_{n \in K}\Vec{z}(t_n) + \Vec{s}_{u}\cdot \sum_{n \in K}\sum_{t \in H_n}\Vec{b}(k_t)
          \end{pmatrix} \\
          &= \begin{pmatrix}
              \sum_{n \in K}\Big(D \cdot \Vec{z}(t_n) + B_1 \sum_{t \in H_n}\Vec{b}(k_t)\Big) \\
              m + \sum_{n\in K}g_{w +1}(n)\\
              \vdots \\
              m + \sum_{n \in K}g_u(n)
          \end{pmatrix}
      \end{align*} 
      Therefore, all entries are in $C$. \\
      
      Case(iii) Finally, assume that $M$ is any $u \times v$ matrix with entries in $\ber$ which is $IPR/\ber^+$. By \cite[Theorem 4.1]{H} , there exists  $m \in \{1,2, \cdots,u \}$ and a  $v \times m$ matrix $G$ with non negative entries and no row equal to $\Vec{0}$ such that $MG$ is a first entries matrix with all of its first entries equal to 1.
      
      Let $G= \begin{pmatrix}
             g_{11} \cdots & g_{1m} \\
      \vdots & \vdots  \\
       g_{v1} \cdots & g_{vm}  
      \end{pmatrix}$ and let $g=$max$\{g_{ij}: i=1,2,\hdots, v, j=1,2,\hdots, m\}$. Let $\delta>0$. Since $MG$ is a first entries matrix, for $\frac{\delta}{gm}>0$, there exists $\vec{x} \in (0,\frac{\delta}{gm})^m, \vec{y} \in FS(\{\vec{b}(n)\})$ such that all entries of $MG\vec{x}+B\vec{y}$ are in $C$. \\
      Let $\vec{x}= \begin{pmatrix}
          x_1\\
          \vdots\\
          x_m
      \end{pmatrix}$ and $G\vec{x}=\vec{z}$. Then $\vec{z}=G\vec{x}=\begin{pmatrix}
           \sum_{i=1}^m g_{1i}x_i\\
          \vdots\\
         \sum_{i=1}^m g_{vi}x_i
      \end{pmatrix}$ and 
      
      $ \sum_{i=1}^m g_{ki}x_i < \delta$ for each $k=1,2,\cdots,v$.  Thus the result is true for any matrix $M$ which is $IPR/\ber^+$.
\end{proof}

\begin{theorem} \label{th5}
    Let $u,v \in \, \ben, M$ is $u \times v$  matrix with entries from $\ber$ which is $IPR/\ber^+$. Then for any C-set near-zero in $\ber^+, \{\Vec{x} \in (\ber^+)^{v}: M \vec{x} \in C^u\} $ is a C-set near zero in $(\ber^+)^{v}$.
\end{theorem}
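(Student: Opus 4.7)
The plan is to invoke Lemma \ref{l7}, which reduces the problem to finding a suitable idempotent $p \in J_0(\ber^+) \cap \overline{C}$ and showing that for each $C' \in p$, the preimage $D(C') = \{\vec{x} \in (\ber^+)^v : M\vec{x} \in (C')^u\}$ is a $J$-set near zero in $(\ber^+)^v$. I would first apply Theorem \ref{thB} to pick such an idempotent $p$, and note that any $C' \in p$ is itself a $C$-set near zero (since $p \in \overline{C'} \cap J_0(\ber^+)$ is an idempotent, again by Theorem \ref{thB}).

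Second, to verify that $D(C')$ is a $J$-set near zero, fix $F = \{\vec{f}_1,\ldots,\vec{f}_d\} \in \mathcal{P}_f(\mathcal{T}_0^v)$ and $\delta > 0$. The goal is to produce $\vec{a} \in (0,\delta)^v$ and $H \in \mathcal{P}_f(\ben)$ such that $M\bigl(\vec{a} + \sum_{t\in H}\vec{f}_i(t)\bigr) \in (C')^u$ for every $i \in \{1,\ldots,d\}$. The main idea is to repackage these $d$ simultaneous conditions as a single matrix equation amenable to Theorem \ref{th4}: let $\tilde{M}$ be the $ud \times v$ matrix obtained by vertically stacking $d$ copies of $M$, and let $\tilde{B}$ be the $ud \times vd$ block-diagonal matrix $\mathrm{diag}(M,M,\ldots,M)$. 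Form the sequence $\vec{b}(n) = (\vec{f}_1(n),\ldots,\vec{f}_d(n)) \in (\ber^+)^{vd}$, which converges to zero, so that $U = FS(\{\vec{b}(n)\}_{n=1}^\infty)$ is a weak $IP$-set near zero in $(\ber^+)^{vd}$. Observe that $\tilde{M}$ is $IPR/\ber^+$ because any $\vec{x}$ that makes $M\vec{x}$ monochromatic (and small) also makes $\tilde{M}\vec{x}$ monochromatic (and small), its entries merely repeating those of $M\vec{x}$.

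Third, apply Theorem \ref{th4} to the matrices $\tilde{M}$, $\tilde{B}$, the $C$-set near zero $C'$, and the weak $IP$-set $U$: this yields $\vec{a} \in (0,\delta)^v$ and $\vec{y} = \sum_{t \in H}\vec{b}(t) \in U$ so that every entry of $\tilde{M}\vec{a} + \tilde{B}\vec{y}$ lies in $C'$. By the block structure,
\[
\tilde{M}\vec{a} + \tilde{B}\vec{y} = \bigl(M(\vec{a}+\textstyle\sum_{t\in H}\vec{f}_1(t)),\ldots,M(\vec{a}+\sum_{t\in H}\vec{f}_d(t))\bigr),
\]
so each block $M(\vec{a} + \sum_{t\in H}\vec{f}_i(t))$ has all entries in $C'$. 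Hence $\vec{a} + \sum_{t\in H}\vec{f}_i(t) \in D(C')$ for every $i$, which is the defining property of a $J$-set near zero in $(\ber^+)^v$. By Lemma \ref{l7}, $D(C)$ itself is a $C$-set near zero in $(\ber^+)^v$.

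The main conceptual step is the stacking trick in the middle paragraph; once it is in place, Theorem \ref{th4} does all the combinatorial work. The only mild subtlety is checking the hypotheses of Theorem \ref{th4}—namely the $IPR/\ber^+$ status of $\tilde{M}$, which follows from the row-repetition observation, and the fact that $C'$ remains a $C$-set near zero, which uses the idempotent property via Theorem \ref{thB}.
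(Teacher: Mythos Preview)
Your proof is correct and follows essentially the same strategy as the paper: reduce via Lemma \ref{l7} to showing that $D(C')$ is a $J$-set near zero for every $C'\in p$, then handle the finitely many sequences $\vec f_1,\ldots,\vec f_d$ simultaneously by a stacking trick together with Theorem \ref{th4}.

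The one difference worth noting is purely in the stacking. The paper uses the $du\times dv$ matrix $N$ whose $d^2$ blocks are all equal to $M$, argues that the $du\times v$ stack $\left(\begin{smallmatrix}M\\\vdots\\M\end{smallmatrix}\right)$ is $IPR/\ber^+$, and then invokes Theorem \ref{th3} to adjoin the remaining $(d-1)v$ columns; this forces Theorem \ref{th4} to be applied with $\delta/d$ and the resulting $\vec x\in(0,\delta/d)^{dv}$ to be summed in blocks to recover $\vec z\in(0,\delta)^v$. Your choice $\tilde M=\left(\begin{smallmatrix}M\\\vdots\\M\end{smallmatrix}\right)$ of size $du\times v$ is simpler: its $IPR/\ber^+$ status is immediate from the row-repetition observation (no appeal to Theorem \ref{th3} is needed), and Theorem \ref{th4} directly delivers $\vec a\in(0,\delta)^v$ without any summing or rescaling. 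Both arguments are valid; yours is a mild streamlining of the paper's.
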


\begin{proof}
    Define $\phi : (\ber ^+)^v \longrightarrow \ber^u$ by $\phi(\Vec{x}) = M\Vec{x}$. Let $C \subseteq \ber^+$ be a $C$-set near zero. Let $p$ be an idempotent in $J_0(\ber^+)$. To show that for every $C \in p, \{ x \in (\ber ^+)^v: M \Vec{x} \in C^u\}= \phi ^{-1}[C^u]$ is a $C$- set near zero in $(\ber^+)^v$, by Lemma \ref{l7}, it is enough to prove that for every C-set near zero in $\ber^+, \phi ^{-1}[C^u]$  is a $J-$set near zero in $(\ber^+)^v$.\\
    i.e., to prove that for every $F \in \mathcal{P}_f((\mathcal{R}_0^+)^v), \delta > 0 \; \exists \;  \vec{a} \in \, (0,\delta)^v$ and H $\in \mathcal{P}_f(\mathbb{N})$ such that for each $\vec{f} \in F, \vec{a} +  \sum_{n \in H}\vec{f}(n) \in \phi^{-1}[C^u]$
       i.e., for each  $\vec{f} \in F$, $M (\Vec{a} + \sum_{n \in H}\vec{f}(n)) \in C^u$\\
Let $F= \{ \vec{f_1}, \vec{f_2},..., \vec{f_m} \}$, for some $m \in \ben$ where each $\vec{f}_i$ is a function from $\ben$ to $(\ber^+)^v$ which converges to zero for each $i=\{1,2,\cdots, m\}$. Define $mu \times mv $ matrices $N$ and $B$ by \\
$N = 
    \begin{pmatrix}
        M & \cdots & M \\
        \vdots \\
        M & \cdots & M
    \end{pmatrix} \text{and} \hspace{0.8mm}
     B = \begin{pmatrix}
        M & O & \cdots & O \\
        O & M & \cdots & O \\
        \vdots \\
        O & O & \cdots & M
    \end{pmatrix}$\\
    where $O$ denotes the zero $u \times v $ matrix.\\
    Trivially the matrix $\begin{pmatrix}
        M\\
        \vdots\\
        M
    \end{pmatrix}$ is $IPR/\ber^+.$\\
    Therefore, By Theorem \ref{th3} $N$ is $IPR/\ber^+.$\\
    Define $\vec{g}: \ben \rightarrow (\ber^+)^{mv}$ by $\vec{g}(n) = \begin{pmatrix}
        \vec{f}_1(n)\\
        \vdots\\
        \vec{f}_m(n)
    \end{pmatrix}$.\\
    Now, $FS(\{\vec{g}(n)\}_{n=1}^\infty)$ is a weak IP set near zero in $(\ber^+)^{mv}$, therefore, by Theorem \ref{th4}, for $\frac{\delta}{m}>0, \exists \; \vec{x}=(\vec{x}_1, \dots, \vec{x}_m) \, \in \, (0,\tfrac{\delta}{m})^{mv}$ and $H \, \in \, \mathcal{P}_f(\ben)$ such that all entries of $N\vec{x} + B\vec{y}$ are in $C$ where $\vec{y}=\sum_{n \in H}\vec{g}(n)$.\\
    i.e., all entries of $\begin{pmatrix}
        M & \dots & M\\
        \vdots & \vdots & \vdots\\
        M & \cdots & M
    \end{pmatrix} \begin{pmatrix}
        \vec{x}_1\\
        \vdots \\
        \vec{x}_m
    \end{pmatrix} + \begin{pmatrix}
        M & O & \cdots & O \\
        O & M & \cdots & O \\
        \vdots & \vdots & \vdots \\
        O & O & \cdots & M
    \end{pmatrix}\sum_{n \in H}\vec{g}(n)$\\ are in $C$.
    i.e., all entries of $M(\vec{x}_1 + \cdots + \vec{x}_m) + M \sum_{n \in H}\vec{g}(n)$ are in $C$\\
    i.e., all entries of $M(\vec{z} + \sum_{k \in H}\vec{f}_i(n))$ are in $C$ for every $i \in \{1,2, \dots, m\}$, where $\vec{z}=\vec{x}_1 + \cdots + \vec{x}_m  \in (0,\delta)^v$.
    \end{proof} 

We are now in a position to prove the following theorem which was stated in the introduction in Theorem \ref{th2}.

\begin{theorem}
    Let u,v $\in \ben$ and let M be a $u \times v$ matrix with entries from $\ber$. The following statements are equivalent.\\
    (a) M is image partition regular near zero over $\mathbb{R}^+$.\\
    (b) For every C-set near zero C in $\ber^+$, there exists $\Vec{x} \in (\ber^+)^v$ such that $M\Vec{x} \in C^u$. \\
    (c) For every C-set near zero C in $\ber^+$, $\{\Vec{x} \in (\ber^+)^v : M\Vec{x} \in C^u\}$ is a C-set near zero in $(\ber^+)^v$.
\end{theorem}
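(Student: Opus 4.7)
The plan is to prove the cyclic chain $(a) \Rightarrow (c) \Rightarrow (b) \Rightarrow (a)$. The implication $(a) \Rightarrow (c)$ is already in hand: it is precisely the content of Theorem \ref{th5}. The implication $(c) \Rightarrow (b)$ will be immediate, since a C-set near zero in $(\ber^+)^v$ is automatically nonempty (by Theorem \ref{thB} it meets any idempotent witness in $J_0((\ber^+)^v)$), and any element $\vec{x}$ of the set $\{\vec{x}\in(\ber^+)^v : M\vec{x}\in C^u\}$ is exactly what (b) requires.

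The substantive direction to carry out is $(b) \Rightarrow (a)$, and the strategy is to exploit the partition regularity of C-sets near zero. Given a finite colouring $\ber^+ = \bigcup_{i=1}^r C_i$ and $\delta > 0$, I would first pick an idempotent $p$ in $J_0(\ber^+)$; such an idempotent exists because $J_0(\ber^+)$ is a compact right topological semigroup, being a compact two-sided ideal of $0^+(\ber^+)$ (by \cite[Theorem 3.10]{BT}, which is the $v=1$ analogue of Lemma \ref{l4}). Since $p\in 0^+(\ber^+)$, we have $(0,\delta)\in p$; and since the $C_i$ partition $\ber^+$, some $C_{i_0}\in p$. Hence $C_{i_0}\cap(0,\delta)\in p$, so $p$ is an idempotent in $\overline{C_{i_0}\cap(0,\delta)}\cap J_0(\ber^+)$, and by Theorem \ref{thB} the set $C_{i_0}\cap(0,\delta)$ is a C-set near zero in $\ber^+$. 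Applying (b) to this set produces $\vec{x}\in(\ber^+)^v$ such that every entry of $M\vec{x}$ lies in $C_{i_0}\cap(0,\delta)\subseteq(-\delta,\delta)$. The entries are monochromatic of colour $i_0$ and sit in the prescribed window, which is exactly what is required for $M$ to be IPR over $\ber^+$ near zero in the sense of Definition \ref{thtB}.

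The only point demanding care is the `near zero' clause of Definition \ref{thtB}: one must produce $M\vec{x}$ with entries in $(-\delta,\delta)^u$, not merely monochromatic entries. This is what forces the ultrafilter-level intersection of the colour class $C_{i_0}$ with $(0,\delta)$ above, and the whole manoeuvre is only legal because any idempotent witnessing a C-set near zero in $\ber^+$ lies in $0^+(\ber^+)$, which by definition contains every neighbourhood of $0$ on the right. Beyond this modest bookkeeping, no serious obstacle arises, as the algebraic structure of $J_0(\ber^+)$ together with the already-proved Theorem \ref{th5} carries the heavy work; the remaining write-up is essentially checking that each implication fits together cleanly.
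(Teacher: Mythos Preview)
Your proposal is correct and follows essentially the same route as the paper: both argue the cycle $(a)\Rightarrow(c)\Rightarrow(b)\Rightarrow(a)$, invoking Theorem~\ref{th5} for $(a)\Rightarrow(c)$, noting $(c)\Rightarrow(b)$ is trivial, and deducing $(b)\Rightarrow(a)$ by picking a colour class that is a C-set near zero (via partition regularity and an idempotent in $J_0(\ber^+)$), intersecting with $(0,\delta)$, and applying~(b). Your write-up is in fact a bit more explicit than the paper's about why $C_{i_0}\cap(0,\delta)$ is again a C-set near zero, but the argument is the same.
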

\begin{proof}
   (b) $\implies$ (a). Let $r \in \ben, \bigcup_{i=1}^rC_i$ be a finite partition of $\ber^+$ and $\epsilon >0$. Now, $C$-sets near zero are partition regular, some $C_i$ is a $C$ set near zero and so $C_i \cap (0,\epsilon)$ is also a $C$  set near zero. So , there exists some $\Vec{x} \in (\ber^+)^v$ such that $M\Vec{x} \in (C_i \cap (0, \epsilon))^u$. Thus, $M$ is image partition regular near zero over $\ber^+$. (c) implies (b) is trivial . The fact that (a) implies (c) is Theorem \ref{th5}.
\end{proof} 

Theorem \ref{th4} gives a characterisation of $C$-sets near zero in $\ber^+$ which is the following.

\begin{theorem}{\label{thD}}
    Let $C \subseteq \ber^+ $ and $p$ be an idempotent in $0^+( \ber^+)$ such that $C \in p$. Let $u,v,d \in \ben$. Let $D \in p$ and let $M$ be a $u \times v$ matrix with entries from $\mathbb{R}$ which is $IPR/\ber^+$, $B$ be a $u \times d$ matrix with entries from $\mathbb{R}$ and $U$ be a weak $IP$-set near zero in $(\ber^+)^d$. Then $D$ satisfies the condition that for any $ \delta >0$, then there exists $\Vec{x} \in  (0, \delta)^v$ and $\Vec{y} \in U$ such that all the entries of $M\Vec{x} + B\Vec{y}$ are in $D$ if and only if $C$ is a $C$ set near zero.   
\end{theorem}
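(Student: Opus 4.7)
The plan is to prove the biconditional by reducing each direction to a prior result in this paper: Theorem~\ref{thB} (which characterizes $C$-sets near zero as those sets $E$ with an idempotent in $\overline{E}\cap J_0(\ber^+)$) together with Theorem~\ref{th4}.

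For the direction ``$C$ is a $C$-set near zero $\Rightarrow$ the stated condition,'' I would take $p$ to be an idempotent in $J_0(\ber^+)\cap \overline{C}$, which exists by Theorem~\ref{thB}. For any $D\in p$, the same $p$ is then an idempotent in $\overline{D}\cap J_0(\ber^+)$, so Theorem~\ref{thB} applied again shows that $D$ is itself a $C$-set near zero. Applying Theorem~\ref{th4} with $D$ in place of $C$, for any admissible $M$, $B$, $U$ and any $\delta>0$, yields the required $\vec{x}\in(0,\delta)^v$ and $\vec{y}\in U$ with $M\vec{x}+B\vec{y}\in D^u$.

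For the converse direction, my goal is to show $p\in J_0(\ber^+)$; combined with $C\in p$, Theorem~\ref{thB} then delivers that $C$ is a $C$-set near zero. To see $p\in J_0(\ber^+)$, it suffices to check that every $D\in p$ is a $J$-set near zero. Given such a $D$, a finite collection $F=\{f_1,\ldots,f_u\}\in\mathcal{P}_f(\mathcal{R}_0^+)$, and $\delta>0$, I would instantiate the hypothesis with $v=1$, $d=u$, with $M$ the $u\times 1$ column of ones (which is trivially $IPR/\ber^+$, since a constant vector is monochromatic for any coloring), with $B=I_u$, and with $U=FS(\{\vec{b}(n)\}_{n=1}^{\infty})$ where $\vec{b}(n)=(f_1(n),\ldots,f_u(n))\in(\ber^+)^u$; note that $\vec{b}(n)\to\vec{0}$, so $U$ is a weak $IP$-set near zero in $(\ber^+)^u$. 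The hypothesis then produces some $x\in(0,\delta)$ and $\vec{y}=\sum_{t\in H}\vec{b}(t)$ for some $H\in\mathcal{P}_f(\ben)$ with all entries of $Mx+B\vec{y}$ lying in $D$; this is precisely the statement $x+\sum_{t\in H}f_i(t)\in D$ for every $i$, which is the $J$-set near zero witness for $F$ and $\delta$.

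The main conceptual hurdle is recognizing how the scalar $J$-set condition (involving several sequences summed coherently over a common finite set $H$) can be realized as a single instance of the matrix condition. The key packaging device is to bundle the finitely many scalar sequences $f_i$ into one vector-valued sequence $\vec{b}$ and take $B=I_u$, so that the vector membership $Mx+B\vec{y}\in D^u$ decouples into the $u$ scalar memberships required. Once this encoding is identified, both directions reduce to Theorem~\ref{thB} and Theorem~\ref{th4} with only routine bookkeeping.
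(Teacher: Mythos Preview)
Your proposal is correct and follows essentially the same route as the paper. The sufficiency direction is reduced to Theorem~\ref{th4} (you make explicit, via Theorem~\ref{thB}, that $D\in p$ is itself a $C$-set near zero, a step the paper glosses over), and the necessity direction uses the identical encoding --- $M$ the $u\times 1$ column of ones, $B=I_u$, and $U=FS(\{\vec b(n)\}_{n=1}^\infty)$ with $\vec b(n)=(f_1(n),\ldots,f_u(n))$ --- to show every $D\in p$ is a $J$-set near zero and hence $p\in J_0(\ber^+)$.
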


\begin{proof}
   The sufficiency is Theorem \ref{th4}. For the necessity, it is enough to prove that for all $D \in p, D$ is a J-set near zero. Let $u \in \ben$ and let $F=\{f_1,f_2,...,f_u\}$ be functions from $\ben$ to $\ber^+$ which converges to zero.
    Define $\vec{g}: \ben \rightarrow (\ber^+)^u$ by $\vec{g}(n) = \begin{pmatrix}
        f_1(n) \\ f_2(n) \\ \vdots \\ f_u(n) 
    \end{pmatrix}$
    for every $n \in \ben$. Let $M$ denote the $u \times 1$ matrix whose entries are all 1, and let $B$ denote the identity $u \times u$ matrix. Since $FS(\{\vec{g}(k)\}_{k=1}^{\infty})$ is a weak $IP$ set near zero in $(\ber^+)^u$, it follows that there exists $x \in (0,\delta)$ and $H \in \mathcal P_f(\ben)$ such that $Mx+B \sum_{k \in H}\vec{g}(k) \in D^u$. i.e. $x + \sum_{n \in H}f_i(n) \in D$ for every $i \in \{1,2, \dots, u\}$. So $D$ is a J-set near zero in $\ber^+$.
\end{proof}

\bibliographystyle{amsplain}

\end{document}